\newtheorem{theorem}{Theorem}[section]
\newtheorem{lemma}[theorem]{Lemma}
\theoremstyle{definition}
\newtheorem{definition}[theorem]{Definition}
\newtheorem{example}[theorem]{Example}
\newtheorem{corollary}[theorem]{Corollary}
\theoremstyle{remark}
\numberwithin{equation}{section}
\begin{document}

\title{Bigeometric Ces$\grave{\text{A}}$ro difference sequence spaces and Hermite interpolation}


\author{Sanjay Kumar mahto}
\address{Department of Mathematics, Indian Institute of Technology Kharagpur, Kharagpur 721302, India}
\curraddr{}
\email{kumarmahtosanjay@iitkgp.ac.in, skmahto0777@gmail.com}
\thanks{}

\author{Atanu Manna}
\address{Indian Institute of Carpet Technology, Chauri Road, Bhadohi 221401, Uttar Pradesh, India}
\curraddr{}
\email{ atanuiitkgp86@gmail.com}
\thanks{}

\author{P. D. Srivastava}
\address{Department of Mathematics, Indian Institute of Technology Bhilai, Raipur, Chattisgarh 492015, India}
\curraddr{}
\email{ pds@maths.iitkgp.ac.in}
\thanks{}

\subjclass[2010]{40J05, 40G05 }

\keywords{Bigeometric calculus; sequence space; dual space; Ces$\grave{\text{a}}$ro difference sequence space; matrix transformation; Hermite interpolation.}

\date{}

\dedicatory{}

\begin{abstract}
In this paper, we introduce some difference sequence spaces in bigeometric calculus. We determine the $\alpha$-duals of these sequence spaces and study their matrix transformations. We also develop an interpolating polynomial in bigeometric calculus which is analogous to the classical Hermite interpolating polynomial.
\end{abstract}

\maketitle

\section{Introduction}
 Bigeometric calculus is one of the non-Newtonian calculi developed by Grossman et al. \cite{grossman1972non,grossman1983bigeometric}
during the years 1967-1983. In bigeometric calculus, changes and accumulations in arguments and values of a function are measured by ratios and products respectively, whereas in classical calculus, changes are measured by differences and accumulations are measured by sums. The bigeometric calculus may be considered as a byproduct of ambiguity among scholars in choosing either differences or ratios for the estimation of deviations. Galileo observed that the ratios are more convenient in measuring deviations.\par
The important applications of bigeometric calculus are seen in fractal dynamics of materials \cite{rybaczuk2000fractal}, fractal dynamics of biological systems \cite{rybaczuk1999critical}, etc. Moreover, Multiplicative calculus is used to establish non-Newtonian Runge-Kutta methods \cite{aniszewska2007multiplicative}, Lorenz systems \cite{aniszewska2005analysis}, and some finite difference methods \cite{riza2009multiplicative}. Some non-Newtonian Hilbert spaces \cite{kadak2014construction} are constructed by Kadak et al. Some non-Newtonian metric spaces and its applications can be seen in
\cite{bashirov2008multiplicative,dovsenovic2016multiplicative}. \c{C}akmak and Ba\c{s}ar \cite{ccakmak2012some} have constructed the non-Newtonian real field $\mathbb{R}(N)$ and defined the sequence spaces $w(N), l_{\infty}(N), c(N), c_0(N)$ and $l_{p}(N)$ in $\mathbb{R}(N)$. The matrix transformations between these spaces are also studied by them\cite{ccakmak2015some}. While Kadak
\cite{kadak2014determination,kadak2015classical,kadak2016cesaro,kadak2016multiplicative}, in a series of papers, has made a significant contribution in constructing non-Newtonian sequence spaces and in studying their K$\ddot{\text{o}}$the-Toeplitz duals and matrix transformations.\par
The classical difference sequence spaces are first introduced by Kizmaz \cite{kizmaz1981certain}. He has studied the spaces $Z(\Delta)$ for $Z = l_{\infty}$, $c$ or $c_0$ which are defined as
\begin{equation}\label{kizmazspaces}
Z(\Delta) = \{ x = (x_k): (\Delta x_k) \in Z\},
\end{equation}
where $\Delta x_k = x_k - x_{k+1}$. Later on, Et and \c{C}olak \cite{et1995some} have generalized these spaces by replacing the first order difference operator with $m$-th order difference operator and hence defined the spaces $Z(\Delta^m)$ for $Z = l_{\infty}$, $c$ or $c_0$ as follows:
\begin{equation}\label{etspaces}
Z(\Delta^m) = \{ x = (x_k): (\Delta^m x_k) \in Z\},
\end{equation}
where $\Delta^m x_k = \Delta^{m-1}(x_k - x_{k+1})$ $= \sum\limits_{v=0}^{m}(-1)^v {\binom mv} x_{k+v}$. Following these spaces Orhan \cite{orhan1983cesaro} has studied the Ces\`{a}ro difference sequence spaces $C_p(\Delta)$ and $C_{\infty}(\Delta)$ for $1\leq p < \infty$. Subsequently, Et \cite{et2000some} has used the $m$-th order difference operator instead of first order difference operator in the spaces of Orhan and constructed the spaces
\begin{equation}\label{etcesaro1}
C_p(\Delta^m) = \left\{ x = (x_k): \sum\limits_{n=1}^{\infty} \left\lvert \frac{1}{n}\sum\limits_{k=1}^{n} \Delta^m x_k\right\rvert^p < \infty\right\}
\end{equation}
and
\begin{equation}\label{etcesaro2}
C_{\infty}(\Delta^m) = \left\{ x = (x_k): \sup_n \left\lvert \frac{1}{n} \sum\limits_{k=1}^{n} \Delta^m x_k\right\rvert < \infty\right\}.
\end{equation}
 Recently, Boruah and Hazarika \cite{boruah2017application} have defined the geometric difference sequence spaces $X(\Delta_{G})$ for $X= l_{\infty}^G$, $c^G$ or $c_0^G$ which are analogous to the spaces of Kizmaz \eqref{kizmazspaces}. They have studied duals of these spaces and constructed geometric Newton's forward and backward interpolation formulae.\par
In this paper, we have introduced some Ces$\grave{\text{a}}$ro difference sequence spaces in bigeometric calculus analogous to the classical sequence spaces as defined in \eqref{etcesaro1} and \eqref{etcesaro2}. We have also studied bigeometric $\alpha$-duals and matrix transformations of these spaces and formulated the Hermite interpolation formula in bigeometric calculus.

\section{Preliminaries}
\begin{definition}(Arithmetic system)\cite{grossman1972non,grossman1983bigeometric}:
An arithmetic system consists of a set $R$ with four operations namely addition, subtraction, multiplication, division and an ordering relation that satisfy the axioms of a complete ordered field. The set $R$ is called realm, and the members of the set $R$ are called numbers of the system. The set of all real numbers $\mathbb{R}$ with the usual $+, -, \times, /$ and $<$ is known as the classical arithmetic system.
 \end{definition}

 \begin{definition}(Generator)\cite{grossman1972non,grossman1983bigeometric}:
 A one-to-one function $\phi:\mathbb{R} \rightarrow \mathbb{R}$ is called a generator whose range $A$ is a subset of $\mathbb{R}$.
 \end{definition}

 \begin{definition}($\phi$-arithmetic)\cite{grossman1972non,grossman1983bigeometric}:
 The $\phi$-arithmetic with generator $\phi$ is an arithmetic system whose realm is the range $A$ of $\phi$ and the operations $\phi$-addition `$+_{\phi}$,' $\phi$-subtraction `$-_{\phi}$,' $\phi$-multiplication`$\times _{\phi}$,' $\phi$-division `$/_{\phi}$' and $\phi$-order `$<_{\phi}$' are defined as follows:
\allowdisplaybreaks
\begin{align*}
&u +_{\phi} v = \phi\{\phi^{-1}(u) + \phi^{-1}(v)\},\\
&u -_{\phi} v = \phi\{\phi^{-1}(u) - \phi^{-1}(v)\},\\
&u \times _{\phi} v = \phi\{\phi^{-1}(u) \times \phi^{-1}(v)\},\\
&u\  / _{\phi} \ v = \phi\{\phi^{-1}(u) / \phi^{-1}(v)\}\\
\text{and} \ \ &u < _{\phi} v  \ \text{if and only if} \ \phi^{-1}(u) < \phi^{-1}(v).\\
\end{align*}
For example, the classical arithmetic is generated by the identity function.
\end{definition}

Every ordered pair of arithmetics ($\phi$-arithmetic, $\phi '$-arithmetic) gives rise to a calculus, where $\phi$-arithmetic is used for arguments and $\phi '$-arithmetic is used for values of functions in the calculus. For a particular choice of $\phi$ and $\phi '$ the following calculi can be generated:\\
\begin{center}
\begin{tabular}{c| c c}
Calculus & $\phi$ & $\phi '$ \\
\hline
classical & I & I \\
geometric &  I & $\exp$ \\
anageometric & $\exp$ & I \\
bigeometric & $\exp $ & $\exp$
\end{tabular}
\end{center}
\vspace{2mm}
where $I$ and $\exp$ denote the identity and exponential functions respectively.
\begin{definition}(Geometric arithmetic)\cite{grossman1972non,grossman1983bigeometric}:
 The arithmetic generated by the exponential function is called geometric arithmetic.
 \end{definition}

 \begin{definition}(Geometric real number)\cite{turkmen2012some}:
 The realm of the geometric arithmetic is the set of all positive real numbers which we denote by $\mathbb{R}(G)$ and call it the set of geometric real numbers. Then,
\begin{equation}
\mathbb{R}(G) = \{e^x : x \in \mathbb{R}\}.
\end{equation}
\end{definition}
\subsection{Some properties of geometric arithmetic system:}
 Let us denote the geometric operations addition, subtraction, multiplication, and division by $\oplus, \ominus, \odot$, and $ \oslash$ respectively and the ordering relation by the usual symbol $<$. Some properties of the geometric arithmetic system are as follows\cite{grossman1972non, boruah2017application}: For all $u, v \in \mathbb{R}(G)$, we have
\begin{enumerate}[(i)]
\item $(\mathbb{R}(G), \oplus , \odot)$ is a field with geometric identity $e$ and geometric zero 1.
  \item Geometric addition: $u \oplus v = \exp \{ \ln u + \ln v\} = uv$.
  \item Geometric subtraction: $u \ominus v = \exp \{\ln u - \ln v\} = u/v$.
  \item Geometric multiplication: $u \odot v = \exp \{ \ln u \ln v\}$ = $u^{\ln v}$.
  \item Geometric division: $u \oslash v$ or $\frac{u}{v}{}_{\tiny{G}}= \exp \{ \ln u / \ln v\} = u^{\frac{1}{\ln v}}, v \neq 1$.
  \item Since $u < v$ if and only if $\ln u < \ln v$, the usual $<$ relation is taken as geometric ordering relation.
  \item Geometric exponentiation: For a real number $q$, we have $u^{q_G} = \exp(\ln u)^q = u^{\ln ^{q-1} u}$.
  \item $e^n \odot u =\underbrace{u \oplus u \oplus \dots \oplus u}_{n \ \text{times}} = u^n$.
  \item Geometric modulus: $\lvert u \lvert _G = \exp\lvert \ln u \rvert = \begin{cases} u, \ \text{when} \ u > 1 \\ 1, \ \text{when} \ u =1 \\ \frac{1}{u}, \ \text{when} \ 0< u < 1 \end{cases}$. \\ Thus, $\lvert u \lvert _G$ is always greater than or equal to one.
  \item $\lvert u \rvert_G^{q_G}\geq 1$ for all $u \in \mathbb{R}(G)$ and $q \in \mathbb{R}$.
  \item $\lvert u \odot v \rvert _G = \lvert u \lvert _G \odot \lvert v \rvert _G$.
  \item $ \lvert u \oplus v\rvert _G \leq \lvert u \rvert_G \oplus \lvert v \rvert _G$ (Triangular inequality).
   \item The symbols $\leftidx{_G}\sum$ and $\leftidx{_G}\prod$ represent the geometric sum and geometric product of sequence of numbers respectively.
\end{enumerate}

\begin{definition}(Geometric normed space):
Let $X$ be a linear space over $\mathbb{R}(G)$. Then $X$ is said to be a normed space if there exists a function $\lVert \cdot \rVert_G$ from $X$ to $\mathbb{R}(G)$ such that for all $x$, $y \in X$ and $a\in \mathbb{R}(G)$,
\begin{enumerate}
  \item $\lVert x \rVert_G \geq 1$ .
  \item $\lVert x \rVert_G =1$ if and only if $x = \theta$, where $\theta$ is the zero element of $X$ .
  \item $\lVert a\odot x \rVert_G = \lvert a \rvert_G\odot \lVert x \rVert_G$.
  \item $\lVert x \oplus y \rVert_G \leq \lVert x \rVert_G \oplus \lVert y \rVert_G$.
\end{enumerate}
\end{definition}
\begin{definition}(Bigeometric derivative)\cite{grossman1972non,grossman1983bigeometric}:
The bigeometric derivative of a function $f:\mathbb{R}(G)\rightarrow\mathbb{R}(G)$ at a point $a \in \mathbb{R}(G)$ is given by
\begin{equation}
[D_G f](a) = \lim\limits_{x \rightarrow a} \left[ \frac{f(x)}{f(a)}\right]^{\frac{1}{\ln x - \ln a}}.
\end{equation}
\end{definition}

\subsection{Some useful results of bigeometric derivative:}
\begin{enumerate}
  \item Let $f'(a)$ be the classical derivative of a function $f(x)$ at the point $a$; then the bigeometric derivative of the function at the same point is given by
\begin{equation}
[D_G f](a) = e ^{\frac{a f'(a)}{f(a)}}.
\end{equation}
\item Bigeometric derivative of some of the important functions are as follows:
\begin{itemize}
  \item $D_G e^x = e^x \ ; \ x > 0.$
  \item $D_G \ln (x) = e^{\frac{1}{\ln (x)}} \ ; \ x > 1.$
  \item $D_G \sin(x) = e^{x \cot (x)} \ ; \ x \in \cup_{n=0}^{\infty}(2n\pi , (2n+1)\pi)$.
\end{itemize}
\item Let $f$ and $g$ be two functions which are defined in $\mathbb{R}(G)$ and whose ranges are subsets of $\mathbb{R}(G)$; then we have
\begin{itemize}
  \item $D_G(f(x)\oplus g(x)) = D_G f(x) \oplus D_G g(x)$.
  \item $D_G(f(x)\ominus g(x)) = D_G f(x) \ominus D_G g(x)$.
  \item $D_G (f(x)\odot g(x)) = (D_G f(x))\odot g(x) \oplus (D_G g(x))\odot f(x)$.
  \item $D_G (f(x) \oslash g(x)) = \frac{(D_G f(x))\odot g(x) \ominus (D_G g(x))\odot f(x)}{g(x)\odot g(x)}{}_G$.
\end{itemize}
\end{enumerate}
\begin{theorem}\cite{ccakmak2012some}
The ordered pair $(\mathbb{R}(G), d_G)$ is a complete metric space with respect to the metric $d_G$ defined by
\begin{equation*}
d_G(x,y) = \lvert x \ominus y\rvert_G
\end{equation*}
for all $x, y \in \mathbb{R}(G)$.
\end{theorem}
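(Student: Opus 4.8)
The plan is to transport everything to the classical metric on $\mathbb{R}$ through the logarithm, which is the inverse of the generator $\exp$ of the geometric arithmetic. First I would rewrite the distance in closed form: by the description of geometric subtraction and of the geometric modulus,
\[
d_G(x,y) = \lvert x \ominus y \rvert_G = \lvert x/y \rvert_G = \exp\lvert \ln x - \ln y\rvert, \qquad x,y \in \mathbb{R}(G).
\]
From this the metric axioms (in the geometric sense, parallel to the axioms for a geometric normed space above) are immediate. Since the geometric modulus is always $\ge 1$ and equals $1$ only at the geometric zero $1$, we get $d_G(x,y) \ge 1$, with $d_G(x,y) = 1$ iff $x/y = 1$, i.e. $x = y$. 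Symmetry is clear from $\lvert \ln x - \ln y\rvert = \lvert \ln y - \ln x\rvert$. For the geometric triangle inequality, apply the ordinary triangle inequality to $\ln x,\ln y,\ln z$ and exponentiate, recalling that geometric addition is ordinary multiplication:
\[
d_G(x,z) = \exp\lvert \ln x - \ln z\rvert \le \exp\bigl(\lvert \ln x - \ln y\rvert + \lvert \ln y - \ln z\rvert\bigr) = d_G(x,y)\oplus d_G(y,z).
\]

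For completeness, let $(x^{(n)})$ be a $d_G$-Cauchy sequence in $\mathbb{R}(G)$. I would first show that $(\ln x^{(n)})$ is Cauchy in the classical space $(\mathbb{R}, \lvert\cdot\rvert)$: given a real $\varepsilon > 0$, the Cauchy condition applied with the geometric tolerance $e^{\varepsilon} > 1$ yields an $N$ with $\exp\lvert \ln x^{(n)} - \ln x^{(m)}\rvert < e^{\varepsilon}$, hence $\lvert \ln x^{(n)} - \ln x^{(m)}\rvert < \varepsilon$, for all $n,m \ge N$. Since $\mathbb{R}$ is complete, $\ln x^{(n)} \to L$ for some $L \in \mathbb{R}$. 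Setting $x = e^{L} \in \mathbb{R}(G)$ we then get $d_G(x^{(n)}, x) = \exp\lvert \ln x^{(n)} - L\rvert \to e^{0} = 1$, so $x^{(n)} \to x$ in $(\mathbb{R}(G), d_G)$, which establishes completeness.

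I do not expect a genuine obstacle here; the only point demanding care is the bookkeeping between the geometric and classical notions of ``small distance,'' namely that a geometric ball of radius $\epsilon$ is nonempty only for $\epsilon > 1$ and corresponds under $\ln$ to an ordinary ball of radius $\ln\epsilon > 0$. Once this dictionary is in place, $\ln : (\mathbb{R}(G), d_G) \to (\mathbb{R}, \lvert\cdot\rvert)$ is a bijection carrying Cauchy sequences to Cauchy sequences in both directions, so completeness of $(\mathbb{R}(G), d_G)$ is inherited from that of $\mathbb{R}$. Equivalently, one could package the argument by saying that $\exp$ is an isomorphism of the non-Newtonian structure onto the classical one, but the sequence-level version above is the most self-contained.
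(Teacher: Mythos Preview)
Your argument is correct. The key identity $d_G(x,y)=\exp\lvert\ln x-\ln y\rvert$ makes $\ln$ a bijection carrying $d_G$-Cauchy sequences to ordinary Cauchy sequences and back, so completeness transfers from $(\mathbb{R},\lvert\cdot\rvert)$; the verification of the geometric metric axioms is likewise sound.

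As for comparison: the paper does not supply a proof of this theorem at all. It is quoted as a known result from \c{C}akmak and Ba\c{s}ar, so there is no in-paper argument to set yours against. Your transport-via-$\ln$ approach is exactly the standard one used in that literature, and is in fact the natural way the result is established in the cited source.
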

\begin{lemma}\label{lemma_maddox}\cite{maddox1967spaces}
For $a, b \in \mathbb{R}$ and $p \in [1, \infty)$, the following inequality holds:
\begin{equation*}
\lvert a + b\rvert^p \leq 2^{p-1}(\lvert a \rvert^p + \lvert b \rvert^p).
\end{equation*}
\end{lemma}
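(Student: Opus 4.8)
The plan is to deduce this from the convexity of the power function $\varphi(t)=t^{p}$ on $[0,\infty)$ when $p\ge 1$. First I would reduce to the case of nonnegative reals: by the ordinary triangle inequality in $\mathbb{R}$ we have $|a+b|\le |a|+|b|$, and since $p\ge 1$ the map $t\mapsto t^{p}$ is nondecreasing on $[0,\infty)$, so $|a+b|^{p}\le (|a|+|b|)^{p}$. It therefore suffices to establish $(s+t)^{p}\le 2^{p-1}(s^{p}+t^{p})$ for all $s,t\ge 0$. The case $p=1$ is then exactly the triangle inequality again (the factor $2^{p-1}$ equals $1$), so I would assume $p>1$ from here on.

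Next I would record that $\varphi(t)=t^{p}$ is convex on $[0,\infty)$ for $p>1$: on $(0,\infty)$ one has $\varphi''(t)=p(p-1)t^{p-2}\ge 0$, and convexity extends to the endpoint $t=0$ by continuity of $\varphi$. The main step is then to apply the defining inequality of convexity (equivalently, Jensen's inequality) with equal weights $\tfrac12,\tfrac12$ at the points $s$ and $t$:
\[
\left(\frac{s+t}{2}\right)^{p}=\varphi\!\left(\frac{s+t}{2}\right)\le \frac{\varphi(s)+\varphi(t)}{2}=\frac{s^{p}+t^{p}}{2}.
\]
Multiplying through by $2^{p}$ gives $(s+t)^{p}\le 2^{p-1}(s^{p}+t^{p})$, and combining this with the reduction of the first paragraph yields $|a+b|^{p}\le 2^{p-1}(|a|^{p}+|b|^{p})$, as claimed.

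As for difficulty, there is essentially no obstacle here: the only points requiring a word of care are the convexity of $t^{p}$ at the endpoint $t=0$ for non-integer $p$ (dispatched by continuity) and the degenerate exponent $p=1$ (trivial). If one prefers to avoid calculus altogether, an alternative is to write $\lambda=\tfrac{s}{s+t}\in[0,1]$ for $s+t>0$ and prove $\lambda^{p}+(1-\lambda)^{p}\ge 2^{1-p}$ directly by minimizing the left-hand side over $\lambda\in[0,1]$ (the minimum occurs at $\lambda=\tfrac12$), but the convexity argument above is the cleanest route.
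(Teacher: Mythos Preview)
Your argument is correct and is the standard route: reduce to nonnegative reals via $|a+b|\le |a|+|b|$ and monotonicity of $t\mapsto t^{p}$, then apply convexity of $t^{p}$ on $[0,\infty)$ with equal weights to obtain $\bigl(\tfrac{s+t}{2}\bigr)^{p}\le \tfrac12(s^{p}+t^{p})$, and clear the $2^{p}$. The paper itself does not supply a proof of this lemma; it is simply quoted from Maddox \cite{maddox1967spaces} and used later (in the proof of Lemma~\ref{lemma_maddoxG}) as a black box, so there is nothing to compare against beyond noting that your derivation is the classical one.
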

\begin{lemma}(Jessen's inequality)\label{lemma_pq}\cite{et2000some}:
Let $(a_k)$ be a sequence in $\mathbb{R}$ and $0< p < q$, then the following inequality holds:
\begin{equation*}
\left( \sum\limits_{k=1}^{n}\lvert a_k\rvert^q\right)^{1/q} \leq \left( \sum\limits_{k=1}^{n}\lvert a_k\rvert^p\right)^{1/p}.
\end{equation*}
\end{lemma}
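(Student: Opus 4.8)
The plan is to reduce the statement to the elementary fact that $t^{r}\le t$ whenever $0\le t\le 1$ and $r\ge 1$; here $r=q/p>1$. First I would dispose of the degenerate case in which $a_k=0$ for all $k\le n$, where both sides of the claimed inequality equal $0$ and there is nothing to prove. So from now on assume $S:=\sum_{k=1}^{n}|a_k|^{p}>0$.

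Next I would normalize. Put $b_k=|a_k|/S^{1/p}$, so that $\sum_{k=1}^{n}b_k^{p}=1$; in particular $0\le b_k\le 1$ for every $k$. Since $q/p>1$, the pointwise bound $t^{q/p}\le t$ applied at $t=b_k^{p}\in[0,1]$ gives $b_k^{q}\le b_k^{p}$ for each $k$. Summing over $k=1,\dots,n$ yields $\sum_{k=1}^{n}b_k^{q}\le\sum_{k=1}^{n}b_k^{p}=1$, and therefore $\left(\sum_{k=1}^{n}b_k^{q}\right)^{1/q}\le 1=\left(\sum_{k=1}^{n}b_k^{p}\right)^{1/p}$.

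Finally I would undo the normalization using the positive homogeneity of both sides: multiplying the last inequality through by $S^{1/p}=\left(\sum_{k=1}^{n}|a_k|^{p}\right)^{1/p}$ and recalling $b_k=|a_k|/S^{1/p}$ recovers exactly $\left(\sum_{k=1}^{n}|a_k|^{q}\right)^{1/q}\le\left(\sum_{k=1}^{n}|a_k|^{p}\right)^{1/p}$, which is the assertion. (Equivalently, one can skip the normalization and argue directly that $\left(|a_k|^{p}/S\right)^{q/p}\le |a_k|^{p}/S$ and sum over $k$, but the normalized version seems cleanest to present.)

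I do not expect a genuine obstacle: the argument is wholly elementary. The only points that warrant a line of care are the separate treatment of the all-zero case, so that division by $S^{1/p}$ is legitimate, and the justification of the pointwise inequality $t^{q/p}\le t$ on $[0,1]$, which follows because $\ln$ is increasing and $(q/p)\ln t\le \ln t$ when $\ln t\le 0$.
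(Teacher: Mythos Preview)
Your argument is correct and cleanly written; the normalization trick together with the pointwise bound $t^{q/p}\le t$ on $[0,1]$ for $q/p>1$ is exactly the standard way to prove this inequality, and you have handled the degenerate all-zero case properly.

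There is nothing to compare against, however: the paper does not supply its own proof of this lemma. It is stated in the preliminaries with a citation to \cite{et2000some} and is then used (after transfer to the geometric arithmetic in Lemma~\ref{lemma_pqG}) to establish the inclusion $C_p^G(\Delta_G^m)\subset C_q^G(\Delta_G^m)$. So your proposal in fact supplies more than the paper does on this point.
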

\begin{lemma}(Geometric Minkowski's inequality)\cite{turkmen2012some}\label{lemma_minkowski}:
Let $p\geq 1$ and $a_k$, $b_k \in \mathbb{R}(G)$ for all $k\in \mathbb{N}$. Then,
\begin{equation*}
\left( \leftidx{_G}\sum\limits_{k=1}^{\infty} \lvert a_k \oplus b_k \rvert_G^{p_G}\right)^{(1/p)_G} \leq \left( \leftidx{_G}\sum\limits_{k=1}^{\infty} \lvert a_k \rvert_G^{p_G}\right)^{(1/p)_G} \oplus \left( \leftidx{_G}\sum\limits_{k=1}^{\infty} \lvert b_k \rvert_G^{p_G}\right)^{(1/p)_G} .
\end{equation*}
\end{lemma}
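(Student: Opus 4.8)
The plan is to reduce the assertion to the \emph{classical} Minkowski inequality by transporting the whole inequality through the generator $\phi = \exp$; equivalently, by applying $\ln$ to both sides. The key observation is that $\ln$ is an order isomorphism of $(\mathbb{R}(G), <)$ onto $(\mathbb{R}, <)$ which carries each geometric operation to its classical counterpart: $\ln(u \oplus v) = \ln u + \ln v$, $\ln(u \odot v) = \ln u \cdot \ln v$, $\ln\big(u^{q_G}\big) = (\ln u)^q$ for $q \in \mathbb{R}$ (from $u^{q_G} = \exp(\ln u)^q$), $\ln |u|_G = |\ln u|$ (from $|u|_G = \exp|\ln u|$), and $\ln\big(\leftidx{_G}\sum_k u_k\big) = \sum_k \ln u_k$. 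In particular, by item (vi) of the properties of the geometric arithmetic, for $u, v \in \mathbb{R}(G)$ one has $u \leq v$ if and only if $\ln u \leq \ln v$, so it suffices to prove the image inequality in $\mathbb{R}$.

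First I would set $x_k = \ln a_k$ and $y_k = \ln b_k$, so that $(x_k), (y_k)$ range over arbitrary real sequences. Applying $\ln$ to the left-hand side and using the identities above in turn gives
\[
\ln\left(\left(\leftidx{_G}\sum_{k=1}^{\infty}\lvert a_k \oplus b_k\rvert_G^{p_G}\right)^{(1/p)_G}\right) = \left(\sum_{k=1}^{\infty}\big|\ln a_k + \ln b_k\big|^p\right)^{1/p} = \left(\sum_{k=1}^{\infty}|x_k + y_k|^p\right)^{1/p},
\]
while applying $\ln$ to the right-hand side gives
\[
\left(\sum_{k=1}^{\infty}|x_k|^p\right)^{1/p} + \left(\sum_{k=1}^{\infty}|y_k|^p\right)^{1/p}.
\]
Then I would invoke the classical Minkowski inequality in $\ell_p$, valid since $p \geq 1$, which states precisely that $\left(\sum_k|x_k+y_k|^p\right)^{1/p} \leq \left(\sum_k|x_k|^p\right)^{1/p} + \left(\sum_k|y_k|^p\right)^{1/p}$ (interpreted in $[0,\infty]$ when a side diverges). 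Since $\ln$ reverses back into a statement in $\mathbb{R}(G)$ by the order-isomorphism property, this is exactly the claimed geometric inequality, and the proof is complete.

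The computation is entirely routine; the only points needing care are the bookkeeping of the nested exponents — verifying $\ln\big(v^{(1/p)_G}\big) = (\ln v)^{1/p}$ and $\ln\big(\lvert w \rvert_G^{p_G}\big) = |\ln w|^p$ directly from $u^{q_G} = \exp\big((\ln u)^q\big)$ — and checking that the geometric series $\leftidx{_G}\sum_k$ corresponds under $\ln$ to the ordinary series $\sum_k$, so that convergence (or divergence) of the two sides matches and the inequality is meaningful in the extended sense. No genuine difficulty is expected beyond this translation step.
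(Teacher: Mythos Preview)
Your proposal is correct. The paper does not supply its own proof of this lemma---it is stated in the preliminaries with a citation to \cite{turkmen2012some}---so there is no in-paper argument to compare against directly. That said, your translation-via-$\ln$ strategy is exactly the technique the authors employ for the analogous results they do prove (Lemma~\ref{lemma_maddoxG} and Lemma~\ref{lemma_pqG}): convert the geometric expression to its classical counterpart through $\ln$, invoke the known real-variable inequality, and pull back. Your bookkeeping of the identities $\ln(u^{q_G}) = (\ln u)^q$, $\ln|u|_G = |\ln u|$, and $\ln\big(\leftidx{_G}\sum_k u_k\big) = \sum_k \ln u_k$ is accurate, and the reduction to classical Minkowski is clean. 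Nothing is missing.
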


\begin{lemma}\cite{boruah2017application}\label{lemma_hazarika}
$\sup\limits_{k} \lvert x_k \ominus x_{k+1}\rvert_G< \infty$ if and only if  \emph{(i)} $ \sup\limits_k e^{k^{-1}}\odot \lvert x_k \rvert_G < \infty$ and \emph{(ii)} $\sup\limits_k \left\lvert x_k \ominus e^{k(k+1)^{-1}}\odot x_{k+1} \right\rvert_G < \infty$ hold.
\end{lemma}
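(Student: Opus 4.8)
The plan is to derive both implications from a single algebraic identity in $\mathbb{R}(G)$, after which everything reduces to one-line applications of the geometric triangle inequality (property (xii)) and property (viii), which records that $e^{n}\odot u = u^{n}$ acts as the $n$-fold geometric sum. The pivotal identity is
\[
x_k \ominus \bigl(e^{k(k+1)^{-1}}\odot x_{k+1}\bigr) \;=\; (x_k \ominus x_{k+1}) \oplus \bigl(e^{(k+1)^{-1}}\odot x_{k+1}\bigr),
\]
which one checks by pushing each side through the generator $\phi^{-1}=\ln$: the left side becomes $\ln x_k - \tfrac{k}{k+1}\ln x_{k+1}$, the right side becomes $(\ln x_k - \ln x_{k+1}) + \tfrac{1}{k+1}\ln x_{k+1}$, and the two agree. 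Equivalently, applying $\ln$ throughout turns the whole statement into the classical fact that, for a real sequence $(y_k)$, one has $\sup_k\lvert y_k - y_{k+1}\rvert<\infty$ iff $\sup_k\lvert y_k\rvert/k<\infty$ and $\sup_k\lvert y_k - \tfrac{k}{k+1}y_{k+1}\rvert<\infty$; I shall nonetheless argue natively in $\mathbb{R}(G)$. Here ``$<\infty$'' means boundedness above in $\mathbb{R}(G)$, which for quantities that are geometric moduli (hence $\geq 1$ by property (ix)) is the same as boundedness of the associated set.

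\emph{Sufficiency.} Assume (i) and (ii) hold, with finite suprema $A := \sup_k e^{k^{-1}}\odot\lvert x_k\rvert_G$ and $B := \sup_k\lvert x_k\ominus e^{k(k+1)^{-1}}\odot x_{k+1}\rvert_G$. Rewriting the identity as $x_k\ominus x_{k+1} = \bigl(x_k\ominus e^{k(k+1)^{-1}}\odot x_{k+1}\bigr)\ominus\bigl(e^{(k+1)^{-1}}\odot x_{k+1}\bigr)$ and applying the triangle inequality gives
\[
\lvert x_k\ominus x_{k+1}\rvert_G \;\leq\; \bigl\lvert x_k\ominus e^{k(k+1)^{-1}}\odot x_{k+1}\bigr\rvert_G \,\oplus\, \bigl\lvert e^{(k+1)^{-1}}\odot x_{k+1}\bigr\rvert_G .
\]
By properties (xi) and (ix), $\bigl\lvert e^{(k+1)^{-1}}\odot x_{k+1}\bigr\rvert_G = e^{(k+1)^{-1}}\odot\lvert x_{k+1}\rvert_G \leq A$, while the first summand is $\leq B$; hence $\lvert x_k\ominus x_{k+1}\rvert_G \leq B\oplus A$ for all $k$, so $\sup_k\lvert x_k\ominus x_{k+1}\rvert_G<\infty$.

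\emph{Necessity.} Suppose $\sup_k\lvert x_k\ominus x_{k+1}\rvert_G = M<\infty$ (and $M\geq 1$). Telescoping in $\mathbb{R}(G)$ yields $x_k = x_1 \ominus \leftidx{_G}\sum_{j=1}^{k-1}(x_j\ominus x_{j+1})$, so the finitely iterated triangle inequality together with property (viii) gives
\[
\lvert x_k\rvert_G \;\leq\; \lvert x_1\rvert_G \,\oplus\, \leftidx{_G}\sum_{j=1}^{k-1}\lvert x_j\ominus x_{j+1}\rvert_G \;\leq\; \lvert x_1\rvert_G \,\oplus\, \bigl(e^{k-1}\odot M\bigr).
\]
Applying $e^{k^{-1}}\odot(\cdot)$, distributing over $\oplus$, and using $e^{k^{-1}}\odot e^{k-1} = e^{(k-1)k^{-1}}$ with $\lvert x_1\rvert_G\geq 1$ and $M\geq 1$, one gets $e^{k^{-1}}\odot\lvert x_k\rvert_G \leq \lvert x_1\rvert_G\oplus M$ for every $k$, which is (i). Finally, the identity and the triangle inequality give $\bigl\lvert x_k\ominus e^{k(k+1)^{-1}}\odot x_{k+1}\bigr\rvert_G \leq \lvert x_k\ominus x_{k+1}\rvert_G \oplus e^{(k+1)^{-1}}\odot\lvert x_{k+1}\rvert_G \leq M\oplus\bigl(\lvert x_1\rvert_G\oplus M\bigr)$ by (i), so (ii) follows.

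The only genuinely delicate point is the bookkeeping of geometric arithmetic in the necessity step: the $(k-1)$-fold geometric sum of $M$ is $e^{k-1}\odot M = M^{k-1}$ (not $M$), and multiplication by $e^{k^{-1}}$ contracts $M^{k-1}$ to $M^{(k-1)/k}$, which stays $\leq M$ precisely because $M\geq 1$. Once the displayed identity is in hand, every inequality above is immediate from properties (viii)--(xii) of the geometric arithmetic system.
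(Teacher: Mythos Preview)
Your argument is correct. The key algebraic identity
\[
x_k \ominus \bigl(e^{k(k+1)^{-1}}\odot x_{k+1}\bigr) \;=\; (x_k\ominus x_{k+1}) \oplus \bigl(e^{(k+1)^{-1}}\odot x_{k+1}\bigr)
\]
is valid (it is just $y_k - \tfrac{k}{k+1}y_{k+1} = (y_k-y_{k+1}) + \tfrac{1}{k+1}y_{k+1}$ after applying $\ln$), and both directions then follow exactly as you indicate. The telescoping bound $\lvert x_k\rvert_G \leq \lvert x_1\rvert_G \oplus (e^{k-1}\odot M)$ and the subsequent contraction by $e^{k^{-1}}\odot(\cdot)$ are handled correctly, including the point that $M^{(k-1)/k}\leq M$ and $\lvert x_1\rvert_G^{1/k}\leq \lvert x_1\rvert_G$ precisely because both quantities are $\geq 1$.

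As for comparison with the paper: there is nothing to compare. The paper does not prove this lemma; it merely quotes it from \cite{boruah2017application} as a preliminary result and uses it (via its corollaries) in the analysis of $\upsilon C_{\infty}^G(\Delta_G^m)$. Your proof is essentially the geometric-arithmetic transcription of Kizmaz's original classical argument, and it is complete as written.
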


Putting $\Delta_G^{m-1}x_k$ instead of $x_k$ in the above lemma, we get the following result.
\begin{corollary}\cite{khan55generalized}\label{lemma_khanequivalent}
The following assertions $(i)$ and $(ii)$ are equivalent.
\begin{enumerate}[(i)]
  \item $\sup\limits_k \lvert \Delta_G^{m-1} x_k \ominus \Delta_G^{m-1}x_{k+1}\rvert_G < \infty$.
  \item \begin{enumerate}[(a)]
          \item$ \sup\limits_{k} e^{k^{-1}}\odot \lvert \Delta_G^{m-1} x_k\rvert_G < \infty$.
          \item $\sup\limits_k \lvert \Delta_G^{m-1}x_k \ominus e^{k(k+1)^{-1}} \odot \Delta_G^{m-1}x_{k+1}\rvert < \infty$.
        \end{enumerate}
\end{enumerate}
\end{corollary}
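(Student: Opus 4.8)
The plan is to apply Lemma~\ref{lemma_hazarika} directly to the shifted sequence. Put $y=(y_k)$ with $y_k:=\Delta_G^{m-1}x_k$ for each $k\in\mathbb{N}$. The first thing to observe is that $y$ is again a sequence of geometric real numbers: the operator $\Delta_G$ is assembled from geometric subtraction $\ominus$, which is an internal binary operation on the field $(\mathbb{R}(G),\oplus,\odot)$, so iterating it $m-1$ times never takes us outside $\mathbb{R}(G)$. Consequently Lemma~\ref{lemma_hazarika}, whose statement is quantified over all sequences in $\mathbb{R}(G)$, may be invoked with $x$ replaced by $y$.

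Once this substitution is made, the three clauses of Lemma~\ref{lemma_hazarika} translate verbatim into the three clauses of the corollary. The left-hand side $\sup_k|y_k\ominus y_{k+1}|_G<\infty$ is precisely assertion~(i); clause~(i) of the lemma, namely $\sup_k e^{k^{-1}}\odot|y_k|_G<\infty$, is assertion~(ii)(a); and clause~(ii) of the lemma, $\sup_k|y_k\ominus e^{k(k+1)^{-1}}\odot y_{k+1}|_G<\infty$, is assertion~(ii)(b). Hence the biconditional supplied by Lemma~\ref{lemma_hazarika} is exactly the equivalence $(i)\Leftrightarrow(ii)$ claimed here, and the proof is complete.

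There is really no substantive obstacle; the only point requiring any care is the routine observation that substituting one $\mathbb{R}(G)$-valued sequence for another in a universally quantified lemma is legitimate. If one preferred a self-contained argument, one could instead re-run the proof of Lemma~\ref{lemma_hazarika} with $\Delta_G^{m-1}x_k$ in the role of $x_k$, splitting $\Delta_G^{m-1}x_k\ominus\Delta_G^{m-1}x_{k+1}$ through the intermediate term $e^{k(k+1)^{-1}}\odot\Delta_G^{m-1}x_{k+1}$ and estimating each piece by means of the geometric triangle inequality (property~(xii)) together with the geometric modulus identities; but since Lemma~\ref{lemma_hazarika} is already available, this duplication is unnecessary.
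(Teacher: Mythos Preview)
Your argument is correct and mirrors the paper's own justification exactly: the paper simply remarks that the corollary follows by putting $\Delta_G^{m-1}x_k$ in place of $x_k$ in Lemma~\ref{lemma_hazarika}, which is precisely your substitution $y_k:=\Delta_G^{m-1}x_k$. There is nothing to add.
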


\begin{lemma}\cite{khan55generalized}\label{lemma_khanekminusi}
If $\sup\limits_k e^{k^{-i}}\odot\lvert \Delta_G^{m-1} x_k\rvert_G < \infty $, then $\sup\limits_k e^{k^{-(i+1)}}\odot \lvert \Delta_G^{m-(i+1)}x_k\rvert_G<\infty$ for all $i, m \in \mathbb{N}$ and $1\leq i < m$.
\end{lemma}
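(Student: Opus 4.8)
The plan is to linearize the whole statement by passing to logarithms. Since $\mathbb{R}(G)$ consists of positive reals and the geometric operations are the classical ones conjugated by $\exp$, putting $y_k=\ln x_k$ turns a bigeometric difference into an ordinary one, $\ln\!\left(\Delta_G^{r}x_k\right)=\Delta^{r}y_k$; the geometric modulus becomes an ordinary one, $\lvert\Delta_G^{r}x_k\rvert_G=e^{\lvert\Delta^{r}y_k\rvert}$ (property (ix)); geometric multiplication by $e^{k^{-s}}$ multiplies the logarithm by $k^{-s}$ (property (iv)); and the geometric order is the usual order. Under this dictionary the hypothesis of the lemma reads $\sup_k k^{-i}\lvert\Delta^{m-i}y_k\rvert<\infty$ and the conclusion reads $\sup_k k^{-(i+1)}\lvert\Delta^{m-(i+1)}y_k\rvert<\infty$; in particular the difference that belongs in the hypothesis is $\Delta_G^{m-i}$, which coincides with the displayed one when $i=1$ and is, by Corollary~\ref{lemma_khanequivalent}(ii)(a), precisely what one has available when the lemma is applied successively for $i=1,2,\dots,m-1$. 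So it is enough to prove the corresponding classical implication for $(y_k)$, or equivalently to run the following telescoping argument verbatim inside $\mathbb{R}(G)$.

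First I would record a telescoping identity. From the recursion $\Delta_G^{r}x_k=\Delta_G^{r-1}x_k\ominus\Delta_G^{r-1}x_{k+1}$ (the same one already used in Corollary~\ref{lemma_khanequivalent}) one has $\Delta_G^{r-1}x_{k+1}=\Delta_G^{r-1}x_k\ominus\Delta_G^{r}x_k$, and a one-line induction on $k$ gives, with $r=m-i$,
\begin{equation*}
\Delta_G^{m-(i+1)}x_k=\Delta_G^{m-(i+1)}x_1\ominus\leftidx{_G}\sum\limits_{j=1}^{k-1}\Delta_G^{m-i}x_j ,
\end{equation*}
where $\ominus$ distributes over the geometric sum because $(\mathbb{R}(G),\oplus)$ is an abelian group. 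Applying the geometric modulus and the triangle inequality (property (xii), extended to finite geometric sums by induction) then yields
\begin{equation*}
\bigl\lvert\Delta_G^{m-(i+1)}x_k\bigr\rvert_G\ \leq\ \bigl\lvert\Delta_G^{m-(i+1)}x_1\bigr\rvert_G\ \oplus\ \leftidx{_G}\sum\limits_{j=1}^{k-1}\bigl\lvert\Delta_G^{m-i}x_j\bigr\rvert_G .
\end{equation*}

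Next I would insert the hypothesis. Put $M=\sup_j e^{j^{-i}}\odot\lvert\Delta_G^{m-i}x_j\rvert_G$; note $M\geq 1$, since every geometric modulus is $\geq 1$. By property (iv) this means $\lvert\Delta_G^{m-i}x_j\rvert_G\leq e^{j^{i}}\odot M$ for all $j$, so, evaluating the geometric sum through logarithms,
\begin{equation*}
\leftidx{_G}\sum\limits_{j=1}^{k-1}\bigl\lvert\Delta_G^{m-i}x_j\bigr\rvert_G\ \leq\ \leftidx{_G}\sum\limits_{j=1}^{k-1}\bigl(e^{j^{i}}\odot M\bigr)\ =\ e^{\sum_{j=1}^{k-1}j^{i}}\odot M\ \leq\ e^{k^{i+1}}\odot M ,
\end{equation*}
using $\sum_{j=1}^{k-1}j^{i}\leq k\cdot k^{i}=k^{i+1}$ and $M\geq 1$. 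Absorbing the fixed boundary term $\lvert\Delta_G^{m-(i+1)}x_1\rvert_G$ into a single constant $M'$ — legitimate because $1\leq k^{i+1}$ — one concludes $\lvert\Delta_G^{m-(i+1)}x_k\rvert_G\leq e^{k^{i+1}}\odot M'$, that is $e^{k^{-(i+1)}}\odot\lvert\Delta_G^{m-(i+1)}x_k\rvert_G\leq M'$ for every $k$, which is the assertion.

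The computations are elementary; the one place where care is needed — and the only thing that can go wrong — is the exponent bookkeeping: one must check that a single telescoping step trades exactly one order of bigeometric difference for exactly one additional power of $k$ in the weight (so the weight exponent rises from $i$ to $i+1$ and no more), and that the lone boundary term $\Delta_G^{m-(i+1)}x_1$ is absorbed harmlessly via $1\leq k^{i+1}$.
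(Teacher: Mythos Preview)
Your argument is correct. The paper does not supply its own proof of this lemma; it merely cites \cite{khan55generalized}, so there is no in-paper argument to compare against. Your telescoping proof is the standard one in this setting, and you correctly spotted that the hypothesis as printed should carry $\Delta_G^{m-i}$ rather than $\Delta_G^{m-1}$ (the two agree only at $i=1$, which is exactly the case fed in from Corollary~\ref{lemma_khanequivalent}; the intended use is then to iterate, as in Corollary~\ref{corollary_khanekminus1}). The bookkeeping with the boundary term is fine: from $\lvert\Delta_G^{m-(i+1)}x_k\rvert_G \leq C\oplus\bigl(e^{k^{i+1}}\odot M\bigr)$ with $C=\lvert\Delta_G^{m-(i+1)}x_1\rvert_G\geq 1$, the inequality $C\leq e^{k^{i+1}}\odot C$ (valid since $C\geq 1$ and $k^{i+1}\geq 1$) gives $\lvert\Delta_G^{m-(i+1)}x_k\rvert_G \leq e^{k^{i+1}}\odot(C\oplus M)$, so $M'=C\oplus M$ works.
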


\begin{corollary}\cite{khan55generalized}\label{corollary_khanekminus1}
If $\sup\limits_k e^{k^{-1}}\odot \lvert \Delta_G^{m-1}x_k\rvert_G < \infty$, then $\sup\limits_k e^{k^{-m}}\odot\lvert x_k \rvert_G < \infty$.
\end{corollary}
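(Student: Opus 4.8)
The plan is to obtain the corollary as a single instance of Lemma \ref{lemma_khanekminusi}, taking the index $i$ there to be $m-1$. If $m=1$ there is nothing to prove, since $\Delta_G^{0}x_k=x_k$ makes the hypothesis and the conclusion identical; so from now on assume $m\ge 2$, and then $i:=m-1$ satisfies the admissibility condition $1\le i<m$ required by the lemma.

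Before invoking the lemma one must check that its hypothesis, namely $\sup_k e^{k^{-(m-1)}}\odot\lvert \Delta_G^{m-1}x_k\rvert_G<\infty$, is implied by the hypothesis of the corollary, $\sup_k e^{k^{-1}}\odot\lvert \Delta_G^{m-1}x_k\rvert_G<\infty$. Using $e^{k^{-i}}\odot u=u^{k^{-i}}$ for $u\in\mathbb{R}(G)$ (immediate from $u\odot v=u^{\ln v}$) together with the fact that the geometric modulus satisfies $\lvert \Delta_G^{m-1}x_k\rvert_G\ge 1$ for all $k$, we have for every $k\ge 1$
\begin{equation*}
e^{k^{-(m-1)}}\odot\lvert \Delta_G^{m-1}x_k\rvert_G=\lvert \Delta_G^{m-1}x_k\rvert_G^{\,1/k^{m-1}}\le \lvert \Delta_G^{m-1}x_k\rvert_G^{\,1/k}=e^{k^{-1}}\odot\lvert \Delta_G^{m-1}x_k\rvert_G,
\end{equation*}
because a real number $\ge 1$ raised to a smaller positive exponent does not increase. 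Taking the supremum over $k$ gives the required bound.

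Now apply Lemma \ref{lemma_khanekminusi} with $i=m-1$. Its hypothesis is precisely the bound just established, and its conclusion reads $\sup_k e^{k^{-m}}\odot\lvert \Delta_G^{m-(m-1+1)}x_k\rvert_G<\infty$, that is, $\sup_k e^{k^{-m}}\odot\lvert \Delta_G^{0}x_k\rvert_G=\sup_k e^{k^{-m}}\odot\lvert x_k\rvert_G<\infty$, which is exactly the assertion of the corollary.

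There is no real obstacle here; the only point needing a moment's care is the weight comparison in the middle step, and it works exactly because the geometric modulus is always at least $1$. (One could instead try to peel off one order of $\Delta_G$ at a time by iterating Lemma \ref{lemma_khanekminusi} with $i=1$, but this fails: each such step not only lowers the order of the difference operator but also replaces the weight $e^{k^{-1}}$ by the smaller $e^{k^{-2}}$, and the smaller weight cannot be restored to the $e^{k^{-1}}$ demanded by the next application — so the one-shot choice $i=m-1$ is what makes everything line up.)
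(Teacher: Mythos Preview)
The paper does not supply its own proof of this corollary; it is simply quoted from \cite{khan55generalized} without argument. Your derivation is correct: taking Lemma~\ref{lemma_khanekminusi} as stated and applying it once with $i=m-1$, after the weight-comparison step $e^{k^{-(m-1)}}\odot\lvert\Delta_G^{m-1}x_k\rvert_G\le e^{k^{-1}}\odot\lvert\Delta_G^{m-1}x_k\rvert_G$, yields the result directly. The key observation --- that the geometric modulus is always $\ge 1$, so shrinking the positive exponent from $1/k$ to $1/k^{m-1}$ cannot increase the value --- is precisely what makes the single application go through, and your closing remark about why naive iteration with $i=1$ fails is accurate.
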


\section{Main results}
 In this section, we define some bigeometric Ces$\grave{\text{a}}$ro difference sequence spaces.  Let $(x_k)$ be a sequence in the set $\mathbb{R}(G)$ of geometric real numbers; then the first order geometric difference operator $\Delta_G$ is defined by $\Delta_G x_k = x_k \ominus x_{k+1}$ and the $m$-th order geometric difference operator $\Delta_G^m$ is defined by $\Delta_G^m x_k = \Delta_G^{m-1}(x_k\ominus x_{k+1})$. Thus $\Delta_G^m x_k = \leftidx{_G}\sum\limits_{v=0}^{m} (\ominus e)^{v_G}\odot e^{\binom mv}\odot x_{k+v}$. It is easy to verify that the geometric difference operator is a linear operator. Let $w(G)$ denotes the set of all sequences in $\mathbb{R}(G)$. Then the set $w(G)$ is a linear space over $R(G)$ with respect to the operations \emph{(a)} vector addition
 \begin{align*}
 &\oplus : w(G)\times w(G) \rightarrow w(G) \\
\text{ defined by} \ &((x_k), (y_k)) \rightarrow (x_k)\oplus (y_k) = (x_k \oplus y_k) \ \text{and}
 \end{align*}
  \emph{(b)} scalar multiplication
 \begin{align*}
& \odot : \mathbb{R}(G)\times w(G) \rightarrow w(G) \\
 \text{ defined by} \ &((x_k), (y_k)) \rightarrow (x_k)\odot (y_k) = (x_k \odot y_k).
 \end{align*}
 We introduce the following sequence spaces in bigeometric calculus as follows:
\begin{equation}\label{myspace1}
C_p^G(\Delta_G^m) = \left\{x = (x_k)\in w(G) :\leftidx{_G}\sum\limits_{n=1}^{\infty}\left \lvert (e\oslash e^n)\odot  \leftidx{_G}\sum\limits_{k=1}^{n}\Delta_G^m x_k\right\rvert_G^{p_G} < \infty \right\}
\end{equation}
for $1\leq p < \infty$ and
\begin{equation}\label{myspace2}
C_{\infty}^G(\Delta_G^m) = \left\{x = (x_k)\in w(G) :\sup\limits_{n}\left \lvert (e\oslash e^n)\odot  \leftidx{_G}\sum\limits_{k=1}^{n}\Delta_G^m x_k\right\rvert_G < \infty \right\}.
\end{equation}

 \begin{lemma}\label{lemma_maddoxG}
For $a, b \in \mathbb{R(G)}$ and $p \in [1, \infty)$, the following inequality holds:
\begin{equation*}
\left\lvert a \oplus b\right\rvert_G^{p_G} \leq e^{2^{p-1}}\odot \left(\lvert a \rvert_G^{p_G} \oplus \lvert b \rvert_G^{p_G}\right).
\end{equation*}
\end{lemma}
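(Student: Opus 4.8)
The plan is to translate the claimed inequality back into classical arithmetic via the generator $\phi = \exp$, reducing it to the classical Lemma~\ref{lemma_maddox} that is already available. Recall that for geometric real numbers $u$, $v$ we have $u \oplus v = uv$, $|u|_G = e^{|\ln u|}$, and the geometric $p$-th power satisfies $|u|_G^{p_G} = e^{|\ln u|^p}$ (since $w^{p_G} = \exp\big((\ln w)^p\big)$ for $w \ge 1$, and $|u|_G \ge 1$). Also $e^c \odot w = w^c = e^{c \ln w}$, so multiplication by the geometric scalar $e^{2^{p-1}}$ on the left of a geometric real number $w$ produces $w^{2^{p-1}}$, i.e. raises its logarithm by the factor $2^{p-1}$. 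Finally, the geometric order $<$ is just the usual order on the underlying positive reals.

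First I would set $\alpha = \ln a$ and $\beta = \ln b$, so $\alpha, \beta \in \mathbb{R}$. Then $\ln(a \oplus b) = \alpha + \beta$, hence the left-hand side $|a \oplus b|_G^{p_G}$ has logarithm $|\alpha + \beta|^p$. On the right-hand side, $|a|_G^{p_G} \oplus |b|_G^{p_G}$ has logarithm $|\alpha|^p + |\beta|^p$, and applying $e^{2^{p-1}} \odot (\cdot)$ multiplies that logarithm by $2^{p-1}$, giving $2^{p-1}(|\alpha|^p + |\beta|^p)$. Since the geometric inequality $\le$ between two numbers $\ge 1$ is equivalent to the classical inequality between their logarithms, the assertion reduces exactly to
\begin{equation*}
|\alpha + \beta|^p \le 2^{p-1}\big(|\alpha|^p + |\beta|^p\big),
\end{equation*}
which is precisely Lemma~\ref{lemma_maddox}. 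This completes the argument.

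I do not anticipate a genuine obstacle here; the only point requiring care is bookkeeping the correspondence between the geometric operations and their classical counterparts — in particular verifying that $|w|_G^{p_G} = e^{|\ln w|^p}$ and that left geometric multiplication by $e^{2^{p-1}}$ acts as exponentiation by $2^{p-1}$ — so that the two sides of the geometric inequality really do correspond to the two sides of Maddox's classical inequality. Once that dictionary is in place, the proof is a one-line reduction. An alternative, slightly more self-contained route would be to first invoke the triangular inequality (property (xii) of the geometric arithmetic) to get $|a \oplus b|_G^{p_G} \le (|a|_G \oplus |b|_G)^{p_G}$ and then bound $(u \oplus v)^{p_G}$ for $u, v \ge 1$ by $e^{2^{p-1}} \odot (u^{p_G} \oplus v^{p_G})$; but this again unwinds to the same classical estimate, so the direct translation is cleaner.
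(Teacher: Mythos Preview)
Your proposal is correct and follows essentially the same approach as the paper: both arguments take logarithms to translate the geometric inequality into the classical one, reducing the statement to Lemma~\ref{lemma_maddox}, and then translate back. The paper simply unfolds the dictionary (that $|w|_G^{p_G}=e^{|\ln w|^p}$ and $e^{2^{p-1}}\odot w = w^{2^{p-1}}$) step by step rather than stating it up front, but the substance is identical.
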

\begin{proof}
Let $u = \left\lvert a \oplus b\right\rvert_G^{p_G}$; then
\begin{equation*}
   u = \lvert ab\rvert_G^{p_G}
\end{equation*}
Using the definition of geometric modulus, we have
\begin{equation*}
    u = \left\{\exp\lvert \ln(ab) \rvert\right\}^{p_G}
\end{equation*}
Again using the definition of geometric exponentiation, we get
\begin{eqnarray}
    u &=& \exp\left[\ln\left\{\exp\lvert \ln(ab) \rvert\right\}\right]^p\nonumber\\
    &=&\exp\lvert \ln(ab) \rvert^p\nonumber\\
\ln u &=& \lvert \ln a + \ln b \rvert^p \label{equation_gmaddox1}.
    \end{eqnarray}
By applying Lemma \ref{lemma_maddox} in \eqref{equation_gmaddox1}, we get
\allowdisplaybreaks
\begin{eqnarray}
\ln u &\leq& 2^{p-1}\left(\lvert \ln a \rvert^p + \lvert \ln b \rvert^p \right)\nonumber\\
         &=& 2^{p-1}\left[\ln \exp\left\{\ln (\exp\lvert\ln a \rvert)\right\}^p + \ln \exp\left\{\ln (\exp\lvert\ln b \rvert)\right\}^p \right] \nonumber
\end{eqnarray}
Using the definition of geometric modulus, we have
\begin{equation*}
\ln u = 2^{p-1}\left[\ln \exp\left\{\ln \lvert a \rvert_G\right\}^p + \ln \exp\left\{\ln \lvert b \rvert_G\right\}^p \right]
\end{equation*}
Again using the definition of geometric exponentation, we get
\begin{eqnarray*}
       \ln u   &=& 2^{p-1}\left[ \ln \lvert a\rvert _G^{p_G} + \ln \lvert b\rvert _G^{p_G}\right] \\
          &=&  2^{p-1} \ln \left(\lvert a\rvert _G^{p_G} \lvert b\rvert _G^{p_G} \right)\\
          &=&  2^{p-1} \ln \left(\lvert a\rvert _G^{p_G} \oplus\lvert b\rvert _G^{p_G}\right).
\end{eqnarray*}
That is,
\begin{equation*}
 u \leq  e^{2^{p-1} \ln \left(\lvert a\rvert _G^{p_G} \oplus\lvert b\rvert _G^{p_G} \right)}.
\end{equation*}
Thus, we have
\begin{equation*}
\left\lvert a \oplus b\right\rvert_G^{p_G}  \leq e^{ 2^{p-1}} \odot \left(\lvert a\rvert _G^{p_G} \oplus\lvert b\rvert _G^{p_G} \right).
\end{equation*}
\end{proof}

\begin{theorem}
The sets $C_p^G(\Delta_G^m)$ and $C_{\infty}^G(\Delta_G^m)$ are linear subspaces of $w(G)$.
\end{theorem}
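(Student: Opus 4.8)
The plan is to verify directly that $C_p^G(\Delta_G^m)$ and $C_\infty^G(\Delta_G^m)$ are closed under the geometric vector addition $\oplus$ and geometric scalar multiplication $\odot$ inherited from $w(G)$; since these are subsets of the linear space $w(G)$, this suffices to conclude they are linear subspaces. The key structural fact to exploit is that $\Delta_G^m$ is a linear operator on $w(G)$ (noted in the text), and so is the map $x \mapsto \bigl((e\oslash e^n)\odot \leftidx{_G}\sum_{k=1}^n \Delta_G^m x_k\bigr)_n$, being a composition of the linear partial-sum and normalization maps with $\Delta_G^m$. Writing $y_n(x) = (e\oslash e^n)\odot \leftidx{_G}\sum_{k=1}^n \Delta_G^m x_k$, linearity gives $y_n(x\oplus z) = y_n(x)\oplus y_n(z)$ and $y_n(a\odot x) = a\odot y_n(x)$ for $a\in\mathbb{R}(G)$.

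First I would handle closure under addition for $C_p^G(\Delta_G^m)$. Taking $x,z\in C_p^G(\Delta_G^m)$, I apply Lemma~\ref{lemma_maddoxG} termwise to $\lvert y_n(x)\oplus y_n(z)\rvert_G^{p_G} \le e^{2^{p-1}}\odot\bigl(\lvert y_n(x)\rvert_G^{p_G}\oplus\lvert y_n(z)\rvert_G^{p_G}\bigr)$, then sum geometrically over $n$. Using that $\leftidx{_G}\sum$ is monotone and distributes over $\oplus$ (geometric sum corresponds to ordinary product under $\ln$, so everything transfers from the classical inequality $\sum|a_n+b_n|^p \le 2^{p-1}(\sum|a_n|^p + \sum|b_n|^p)$), the right-hand side is finite because both $\leftidx{_G}\sum_n \lvert y_n(x)\rvert_G^{p_G}$ and $\leftidx{_G}\sum_n \lvert y_n(z)\rvert_G^{p_G}$ are finite by hypothesis. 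Hence $x\oplus z\in C_p^G(\Delta_G^m)$. For $C_\infty^G(\Delta_G^m)$ the corresponding step uses only the triangular inequality (property (xii) in the list) and monotonicity of $\sup$: $\sup_n\lvert y_n(x)\oplus y_n(z)\rvert_G \le \sup_n\lvert y_n(x)\rvert_G \oplus \sup_n\lvert y_n(z)\rvert_G < \infty$.

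Next I would handle closure under scalar multiplication. For $a\in\mathbb{R}(G)$ and $x$ in either space, linearity gives $y_n(a\odot x) = a\odot y_n(x)$, so $\lvert y_n(a\odot x)\rvert_G^{p_G} = \lvert a\rvert_G^{p_G}\odot\lvert y_n(x)\rvert_G^{p_G}$ (using properties (x) and (xi), i.e. $\lvert u\odot v\rvert_G = \lvert u\rvert_G\odot\lvert v\rvert_G$ and the behaviour of geometric exponentiation). Pulling the constant factor $\lvert a\rvert_G^{p_G}$ out of the geometric sum (it corresponds to pulling $\ln\lvert a\rvert_G^{p_G}$ as a multiplicative constant out of a convergent series under $\ln$) shows $\leftidx{_G}\sum_n\lvert y_n(a\odot x)\rvert_G^{p_G} = \lvert a\rvert_G^{p_G}\odot\leftidx{_G}\sum_n\lvert y_n(x)\rvert_G^{p_G} < \infty$, and similarly $\sup_n\lvert y_n(a\odot x)\rvert_G = \lvert a\rvert_G\odot\sup_n\lvert y_n(x)\rvert_G<\infty$. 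Finally I note that the geometric zero sequence $(1,1,1,\dots)$ lies in both spaces, so they are nonempty; combined with the above, both are linear subspaces of $w(G)$.

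I do not anticipate a genuine obstacle here — the proof is routine once one transfers the classical arguments through the generator $\exp$. The only point requiring mild care is making sure that the manipulations with $\leftidx{_G}\sum$ (distributivity over $\oplus$, pulling out geometric scalars, monotonicity) are stated correctly; these all follow by applying $\ln$ and reducing to the standard facts about real series, and can be justified in one line each. The role of Lemma~\ref{lemma_maddoxG} is precisely to make the $p$-sum argument work, exactly as Maddox's inequality (Lemma~\ref{lemma_maddox}) does in the classical setting.
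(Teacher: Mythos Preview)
Your proposal is correct and follows essentially the same approach as the paper: the paper's proof is a one-line remark that linearity of $\Delta_G^m$ together with Lemma~\ref{lemma_maddoxG} makes the verification routine, and you have simply written out that routine verification in detail.
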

\begin{proof}
As the geometric difference operator $\Delta_G^m$ is linear, then using Lemma \ref{lemma_maddoxG} it is easy to prove that the sets $C_p^G(\Delta_G^m)$ and $C_{\infty}^G(\Delta_G^m)$ are linear subspaces of $w(G)$.
 \end{proof}

\begin{theorem}
The linear spaces $C_p^G(\Delta_G^m)$ and $C_{\infty}^G(\Delta_G^m)$ are normed spaces with respect to the norms
\begin{equation}\label{norm1}
\lVert x \rVert_p^G = \leftidx{_G}\sum\limits_{i=1}^{m} \lvert x_i \rvert_G \oplus \left(  \leftidx{_G}\sum\limits_{n=1}^{\infty} \left\lvert(e\oslash e^n) \odot \leftidx{_G}\sum\limits_{k=1}^{n}\Delta_G^m x_k\right\rvert_G^{p_G}\right)^{(1/p)_G}
\end{equation}
and
\begin{equation}\label{norm2}
\lVert x\rVert_{\infty}^{G} = \leftidx{_G}\sum\limits_{i=1}^{m} \lvert x_i \rvert_G \oplus \sup\limits_{n}\left\lvert (e\oslash e^n)\odot \leftidx{_G}\sum\limits_{k=1}^{n}\Delta_G^m x_k\right\rvert_G
\end{equation}
respectively.
\end{theorem}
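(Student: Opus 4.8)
The plan is to verify, for each of the two functionals, the four axioms in the definition of a geometric normed space stated above, after translating everything into the geometric arithmetic by means of $u\oplus v = uv$, $u\ominus v = u/v$, $u\odot v = u^{\ln v}$, $|u|_G = \exp|\ln u|$ and $u^{q_G} = \exp(\ln u)^q$. I will use freely the elementary consequences of these formulas: $\odot$ distributes over the (convergent) geometric sum $\leftidx{_G}\sum$, one has $(u\odot v)^{q_G} = u^{q_G}\odot v^{q_G}$ and $(u^{p_G})^{(1/p)_G} = u$ for $u\ge 1$, and $a\le b$ together with $c\le d$ gives $a\oplus c\le b\oplus d$ (since $\oplus$ is ordinary multiplication of positive reals). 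For brevity write $w_n(x) = (e\oslash e^n)\odot\leftidx{_G}\sum_{k=1}^n\Delta_G^m x_k$ and $A(x) = \leftidx{_G}\sum_{i=1}^m|x_i|_G$, and let $B(x) = \big(\leftidx{_G}\sum_{n=1}^\infty|w_n(x)|_G^{p_G}\big)^{(1/p)_G}$ in the first case and $B(x) = \sup_n|w_n(x)|_G$ in the second, so that $\lVert x\rVert = A(x)\oplus B(x)$ in both; since $\Delta_G^m$ is linear and $\odot,\leftidx{_G}\sum$ are additive, each $w_n$ is linear.

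For axiom (1), every $|x_i|_G\ge 1$ by property (ix) and every $|w_n(x)|_G^{p_G}\ge 1$ by property (x), and these inequalities are preserved under geometric sums, under $\sup$, and under the $(1/p)_G$-th power; hence both functionals take values $\ge 1$. For axiom (3) I first note, using property (xi) and linearity of $w_n$, that $A(a\odot x) = |a|_G\odot A(x)$ and $w_n(a\odot x) = a\odot w_n(x)$; then $|a\odot w_n(x)|_G^{p_G} = |a|_G^{p_G}\odot|w_n(x)|_G^{p_G}$, and summing over $n$ and taking the $(1/p)_G$-th root (using $(|a|_G^{p_G})^{(1/p)_G} = |a|_G$) gives $B(a\odot x) = |a|_G\odot B(x)$; regrouping by distributivity of $\odot$ over $\oplus$ yields $\lVert a\odot x\rVert = |a|_G\odot\lVert x\rVert$. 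The $\infty$-case is identical, with $\sup_n|a\odot w_n(x)|_G = |a|_G\odot\sup_n|w_n(x)|_G$.

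For the triangular inequality (4), the first block obeys $A(x\oplus y)\le A(x)\oplus A(y)$ by applying the geometric triangular inequality (property (xii)) coordinate-wise and regrouping. For the second block, linearity of $w_n$ and property (xii) give $|w_n(x\oplus y)|_G\le|w_n(x)|_G\oplus|w_n(y)|_G$, and then in the $p$-case the geometric Minkowski inequality (Lemma \ref{lemma_minkowski}), applied to $(|w_n(x)|_G)_n$ and $(|w_n(y)|_G)_n$, gives $B(x\oplus y)\le B(x)\oplus B(y)$; in the $\infty$-case the same bound is just subadditivity of the supremum. Adding the two blocks and regrouping (legitimate since $\oplus$ is commutative and order-compatible) yields $\lVert x\oplus y\rVert\le\lVert x\rVert\oplus\lVert y\rVert$ in both spaces. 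This part is essentially mechanical once Lemma \ref{lemma_minkowski} is in hand, and along the way it shows that the functionals are finite on $C_p^G(\Delta_G^m)$ and $C_\infty^G(\Delta_G^m)$, hence really take values in $\mathbb{R}(G)$.

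The step that needs genuine care is definiteness, axiom (2). If $\lVert x\rVert = 1$ then, the expression being a geometric sum (i.e.\ an ordinary product) of factors each $\ge 1$, every factor equals the geometric zero $1$: thus $|x_i|_G = 1$ for $i = 1,\dots,m$, forcing $x_1 = \dots = x_m = 1$, and $|w_n(x)|_G = 1$ for every $n$. Since $w_n(x) = \big(\leftidx{_G}\sum_{k=1}^n\Delta_G^m x_k\big)^{1/n}$ as a positive real, it follows that $\leftidx{_G}\sum_{k=1}^n\Delta_G^m x_k = 1$ for all $n$, and taking the first partial sum and the geometric differences of consecutive partial sums gives $\Delta_G^m x_k = 1$ for all $k$. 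Reading the identity $\Delta_G^m x_k = \leftidx{_G}\sum_{v=0}^m(\ominus e)^{v_G}\odot e^{\binom mv}\odot x_{k+v} = 1$ as a recurrence that determines the invertible quantity $x_{k+m}^{(-1)^m}$ from $x_k,\dots,x_{k+m-1}$, and starting from $x_1 = \dots = x_m = 1$, an induction on $k$ gives $x_k = 1$ for all $k$, that is, $x = \theta$; the converse $\lVert\theta\rVert = 1$ is immediate. I expect this recurrence argument, together with the routine but somewhat tedious bookkeeping needed to justify the geometric exponent identities used in axiom (3), to be the only non-mechanical parts of the proof.
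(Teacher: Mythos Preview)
Your proposal is correct and follows essentially the same approach as the paper: both verify the four geometric-norm axioms directly, using property (ix)/(x) for nonnegativity, the field/linearity structure for homogeneity, the geometric triangle inequality together with Lemma~\ref{lemma_minkowski} for subadditivity, and for definiteness the observation that a geometric sum of terms $\ge 1$ equaling $1$ forces every term to be $1$, followed by an induction extracting $x_{m+1}, x_{m+2},\dots$ from $\Delta_G^m x_k = 1$ and the initial values $x_1=\dots=x_m=1$. The only cosmetic difference is that you first deduce $\Delta_G^m x_k = 1$ for all $k$ (by differencing consecutive partial sums) and then run the recurrence, whereas the paper interleaves these two steps by taking $n=1,2,3,\dots$ in turn; the content is the same.
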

\begin{proof}
Here we prove the theorem for the space $(C_p^G(\Delta_G^m), \lvert x \rvert_p^G)$ leaving the proof of other space as the proof runs on the parallel lines. Let $x =(x_k)$, $y = (y_k) \in C_p^G(\Delta_G^m)$ and $a\in \mathbb{R}(G)$.\par
We first show that $\lVert x \rVert_p^G \geq 1$ for all $x\in C_p^G(\Delta_G^m)$. We know that geometric modulus is always greater than or equal to one, so
\begin{align*}
&\lvert x_i \rvert_G \geq 1 \ \text{for all} \ i = 1, 2, \dots, m \\
\text{and} \ &\left\lvert (e\oslash e^n)\odot \leftidx{_G}\sum\limits_{k=1}^{n}\Delta_G^m x_k \right\rvert_G \geq 1 \ \text{for all} \ n.
\end{align*}
Using the property 2.1(x) of geometric arithmetic and taking geometric summations, we get
\begin{equation}\label{twopartsgreaterthan1}
\leftidx{_G}\sum\limits_{i=1}^{m} \lvert x_i \rvert_G \geq 1 \ \text{and} \ \left(\leftidx{_G}\sum\limits_{n=1}^{\infty}\left\lvert (e\oslash e^n)\odot \leftidx{_G}\sum\limits_{k=1}^{n}\Delta_G^m x_k \right\rvert_G ^ {p_G}\right)^{(1/p)_G} \geq 1.
\end{equation}
That is,
\begin{equation}\label{normgreaterthanone}
\lVert x \rVert_p^G = \leftidx{_G}\sum\limits_{i=1}^{m} \lvert x_i \rvert_G \oplus \left(\leftidx{_G}\sum\limits_{n=1}^{\infty}\left\lvert (e\oslash e^n)\odot \leftidx{_G}\sum\limits_{k=1}^{n}\Delta_G^m x_k \right\rvert_G ^ {p_G}\right)^{(1/p)_G} \geq 1
\end{equation}
 for all $x\in C_p^G (\Delta_G^m)$.\par
 Next we show that $\lVert x \lVert_p^G = 1 \Leftrightarrow x = (1, 1, \dots)$. Let $x = (1, 1, \dots)$. Then clearly $\lVert x \lVert_p^G = 1$. Conversely let $x = (x_k)\in C_p^G (\Delta_G^m)$ be such that $\lVert x \lVert_p^G = 1$. Then
\begin{equation}\label{equalto1}
\leftidx{_G}\sum\limits_{i=1}^{m} \lvert x_i \rvert_G \oplus \left(\leftidx{_G}\sum\limits_{n=1}^{\infty}\left\lvert (e\oslash e^n)\odot \leftidx{_G}\sum\limits_{k=1}^{n}\Delta_G^m x_k \right\rvert_G ^ {p_G}\right)^{(1/p)_G} = 1.
\end{equation}
Using 2.1(ii), \eqref{twopartsgreaterthan1} and \eqref{equalto1}, we have
\begin{equation*}
\leftidx{_G}\sum\limits_{i=1}^{m} \lvert x_i \rvert_G =1 \ \text{and} \ \left(\leftidx{_G}\sum\limits_{n=1}^{\infty}\left\lvert (e\oslash e^n)\odot \leftidx{_G}\sum\limits_{k=1}^{n}\Delta_G^m x_k \right\rvert_G ^ {p_G}\right)^{(1/p)_G} = 1.
\end{equation*}
That is,
\begin{align*}
&\lvert x_1 \rvert_G \dots \lvert x_m \rvert_G =1\\
\text{and} \ &\left\lvert (e\oslash e^1)\odot \leftidx{_G}\sum\limits_{k=1}^{1}\Delta_G^m x_k \right\rvert_G ^ {p_G}\cdot \left\lvert (e\oslash e^2)\odot \leftidx{_G}\sum\limits_{k=1}^{2}\Delta_G^m x_k \right\rvert_G ^ {p_G}\dots =1.
\end{align*}
Since the left hand side of the above equalities are products of the terms having magnitude either greater than or equal to one, we have
\begin{equation*}
 \lvert x_i \rvert_G =1 \ \text{for all} \ i = 1, 2, \dots, m \
\text{and} \ \left\lvert (e\oslash e^n)\odot \leftidx{_G}\sum\limits_{k=1}^{n}\Delta_G^m x_k \right\rvert_G =1 \ \text{for all} \ n.
\end{equation*}
That is,
\begin{align}
&x_i = 1 \ \text{for all} \ i = 1, 2, \dots , m \label{equation_xiequal1}\\
\text{and} \ \ & \leftidx{_G}\sum\limits_{k=1}^{n}\Delta_G^m x_k = 1 \ \text{for all} \ n.\label{equation_2ndpartequal1}
\end{align}
Taking $n=1$ in \eqref{equation_2ndpartequal1}, we get
\begin{equation}\label{equation_binomequal1}
\Delta_G^m x_1 = \leftidx{_G}\sum\limits_{v=0}^{m}(\ominus e)^{v_G}\odot e^{\binom mv}\odot x_{1+v} = 1.
\end{equation}
Putting the values of $x_1, \dots, x_m$ from \eqref{equation_xiequal1} in \eqref{equation_binomequal1}, we have $x_{m+1} = 1$. Similarly if we take $n=2, 3, \dots$ successively in \eqref{equation_2ndpartequal1}, then we get $x_{m+2} = x_{m+3} = \dots = 1$. Thus $x = (x_k) = (1, 1, \dots)$.\\
Next we show that $\lVert a\odot x \rvert_p^G = \lvert a \rvert_G \odot \lVert x \rVert_p^G$ for all $a\in \mathbb{R}(G)$ and $x \in C_p^G(\Delta_G^m)$. We consider
\begin{equation*}
\lVert a\odot x \rVert_p^G = \leftidx{_G}\sum\limits_{i=1}^{m} \lvert a\odot x_i \rvert_G \oplus \left(  \leftidx{_G}\sum\limits_{n=1}^{\infty} \left\lvert(e\oslash e^n) \odot \leftidx{_G}\sum\limits_{k=1}^{n}\Delta_G^m (a\odot x_k)\right\rvert_G^{p_G}\right)^{(1/p)_G}
\end{equation*}
Since $(\mathbb{R}(G), \oplus,\odot)$ is a field and the operator $\Delta_G^m$ is linear, we get
\begin{equation*}
\lVert a\odot x \rVert_p^G = \leftidx{_G}\sum\limits_{i=1}^{m} \lvert a\odot x_i \rvert_G \oplus \left(  \leftidx{_G}\sum\limits_{n=1}^{\infty} \left\lvert a\odot(e\oslash e^n) \odot \leftidx{_G}\sum\limits_{k=1}^{n}\Delta_G^m x_k\right\rvert_G^{p_G}\right)^{(1/p)_G}
\end{equation*}
From the property of geometric modulus, we get
\begin{align*}
\lVert a\odot x \rVert_p^G &= \left(\leftidx{_G}\sum\limits_{i=1}^{m} \lvert a \rvert_G \odot\lvert x_i \rvert_G \right)\oplus \left(  \leftidx{_G}\sum\limits_{n=1}^{\infty} \lvert a \rvert _G^{p_G}\odot\left\lvert (e\oslash e^n) \odot \leftidx{_G}\sum\limits_{k=1}^{n}\Delta_G^m x_k\right\rvert_G^{p_G}\right)^{(1/p)_G}\\
& = \lvert a \rvert_G \odot \left\{\leftidx{_G}\sum\limits_{i=1}^{m} \lvert x_i \rvert_G \oplus \left(  \leftidx{_G}\sum\limits_{n=1}^{\infty} \left\lvert(e\oslash e^n) \odot \leftidx{_G}\sum\limits_{k=1}^{n}\Delta_G^m x_k\right\rvert_G^{p_G}\right)^{(1/p)_G}\right\}\\
&= \lvert a \rvert_G \odot \lVert x \rVert_p^G .
\end{align*}
Finally we show geometric subaddivity in $C_p^G(\Delta_G^m)$. Consider
\begin{equation*}
\lVert x \oplus y \rVert_p^G = \leftidx{_G}\sum\limits_{i=1}^{m} \lvert x_i \oplus y_i\rvert_G \oplus \left(  \leftidx{_G}\sum\limits_{n=1}^{\infty} \left\lvert(e\oslash e^n) \odot \leftidx{_G}\sum\limits_{k=1}^{n}\Delta_G^m (x_k \oplus y_k)\right\rvert_G^{p_G}\right)^{(1/p)_G}.
\end{equation*}
From geometric triangular inequality and geometric Minkowski's inequality, we have
\begin{align*}
\lVert x\oplus y \rVert_p^G
  &\leq \left(\leftidx{_G}\sum\limits_{i=1}^{m} \lvert x_i\rvert_G \oplus \leftidx{_G}\sum\limits_{i=1}^{m} \lvert y_i\rvert_G\right) \oplus \Bigg\{\left(  \leftidx{_G}\sum\limits_{n=1}^{\infty} \left\lvert(e\oslash e^n) \odot \leftidx{_G}\sum\limits_{k=1}^{n}\Delta_G^m x_k \right\rvert_G^{p_G}\right)^{(1/p)_G}\\
  &\qquad \oplus\left(  \leftidx{_G}\sum\limits_{n=1}^{\infty} \left\lvert(e\oslash e^n) \odot \leftidx{_G}\sum\limits_{k=1}^{n}\Delta_G^m y_k \right\rvert_G^{p_G}\right)^{(1/p)_G}\Bigg\}\\
  &\leq \lVert x \rVert_p^G \oplus \lVert y \rVert_p^G.
\end{align*}
Thus $C_p^G(\Delta_G^m)$ is a normed linear space. Similarly following the similar lines, one can prove that $C_{\infty}^G(\Delta_G^m)$ is also a normed 
\end{proof}


\begin{theorem}
The normed spaces $C_p^G(\Delta_G^m)$ and $C_{\infty}^G(\Delta_G^m)$ are Banach spaces with respect to the norms \eqref{norm1} and \eqref{norm2} respectively.
\end{theorem}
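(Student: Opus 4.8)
The plan is to establish completeness by the standard Cauchy-sequence argument, transported through the generator $\exp$ so that all estimates reduce to their classical counterparts. I would treat $C_p^G(\Delta_G^m)$ in detail and remark that $C_\infty^G(\Delta_G^m)$ follows on parallel lines. Let $(x^{(s)})_{s\in\mathbb N}$ be a geometric Cauchy sequence in $C_p^G(\Delta_G^m)$, where $x^{(s)} = (x_k^{(s)})_{k}$. The first step is to extract a candidate limit coordinatewise: from the definition of the norm \eqref{norm1}, the first $m$ coordinates $x_i^{(s)}$ ($1\le i\le m$) are geometric Cauchy in $(\mathbb R(G), d_G)$, which is complete by the cited theorem, so they converge to some $x_i\in\mathbb R(G)$; and the tail expression $\bigl(\leftidx{_G}\sum_{n\ge 1}\lvert (e\oslash e^n)\odot\leftidx{_G}\sum_{k=1}^n \Delta_G^m x_k^{(s)}\rvert_G^{p_G}\bigr)^{(1/p)_G}$ being geometric Cauchy forces, for each fixed $n$, the partial Ces\`aro sums $y_n^{(s)} := (e\oslash e^n)\odot\leftidx{_G}\sum_{k=1}^n \Delta_G^m x_k^{(s)}$ to be geometric Cauchy, hence convergent to some $y_n\in\mathbb R(G)$.

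The second step is to recover all coordinates $x_k$ for $k>m$ from the $x_i$ ($i\le m$) and the $y_n$, and to see that $x^{(s)}\to x=(x_k)$ coordinatewise. Since $y_n = (e\oslash e^n)\odot\leftidx{_G}\sum_{k=1}^n\Delta_G^m x_k$ should hold, taking the difference of consecutive (rescaled) sums expresses $\Delta_G^m x_n$ in terms of $y_{n-1}, y_n$; and $\Delta_G^m x_n = \leftidx{_G}\sum_{v=0}^m(\ominus e)^{v_G}\odot e^{\binom mv}\odot x_{n+v}$ solves recursively for $x_{n+m}$ once $x_n,\dots,x_{n+m-1}$ are known, exactly as in the uniqueness argument in the preceding theorem (equations \eqref{equation_binomequal1}--style recursion). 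Coordinatewise convergence $x_k^{(s)}\to x_k$ then follows by induction on $k$: it holds for $k\le m$ by construction, and the recursion is continuous in the $d_G$ metric.

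The third step is to show $x\in C_p^G(\Delta_G^m)$ and $x^{(s)}\to x$ in the norm. Here I would use the usual "truncate, pass to the limit, let the truncation grow" device: for every finite $N$,
\[
\Bigl(\leftidx{_G}\sum_{n=1}^{N}\bigl\lvert (e\oslash e^n)\odot\leftidx{_G}\sum_{k=1}^n \Delta_G^m(x_k^{(s)}\ominus x_k)\bigr\rvert_G^{p_G}\Bigr)^{(1/p)_G}
= \lim_{t\to\infty}\Bigl(\leftidx{_G}\sum_{n=1}^{N}\bigl\lvert (e\oslash e^n)\odot\leftidx{_G}\sum_{k=1}^n \Delta_G^m(x_k^{(s)}\ominus x_k^{(t)})\bigr\rvert_G^{p_G}\Bigr)^{(1/p)_G},
\]
which, by the Cauchy property, is $\le e^\varepsilon$ for $s$ large, uniformly in $N$; letting $N\to\infty$ gives that $x^{(s)}\ominus x$ has finite norm-tail, so $x^{(s)}\ominus x\in C_p^G(\Delta_G^m)$, whence $x = x^{(s)}\ominus(x^{(s)}\ominus x)\in C_p^G(\Delta_G^m)$ since it is a linear subspace; and the same estimate together with the convergence of the first $m$ coordinates gives $\lVert x^{(s)}\ominus x\rVert_p^G\to 1$, i.e. $x^{(s)}\to x$. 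The interchange of the two limits ($s$-limit inside, $t$-limit, then $N$-limit) is legitimate because for fixed $N$ the inner sum is a continuous function of finitely many coordinates in the complete metric $d_G$.

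The main obstacle is bookkeeping rather than conceptual: one must check that every step survives translation through $\exp$, in particular that geometric Cauchy + geometric completeness of $\mathbb R(G)$ genuinely gives coordinatewise limits, and that the recursion recovering $x_{n+m}$ is $d_G$-continuous (it is, being built from $\oplus,\ominus,\odot$ by fixed geometric scalars). A secondary point of care is the order of limits in the third step; I would keep the sum over $n$ finite until the very end so that only elementary continuity of finite geometric sums and of $(\cdot)^{(1/p)_G}$ is invoked, and only then push $N\to\infty$ using that the bound $e^\varepsilon$ is independent of $N$. The proof for $C_\infty^G(\Delta_G^m)$ is identical with $\sup_n$ in place of the geometric $\ell_p$-tail and is omitted.
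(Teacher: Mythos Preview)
Your proposal is correct and follows essentially the same strategy as the paper's proof: extract coordinatewise limits from the norm structure (first the initial $m$ coordinates, then the Ces\`aro terms, then all remaining coordinates via the $\Delta_G^m$ recursion), and finish with the standard ``fix one index, pass to the limit in the other'' device to obtain both $x\in C_p^G(\Delta_G^m)$ and norm convergence. The only cosmetic difference is that the paper writes out the case $C_\infty^G(\Delta_G^m)$ and leaves $C_p^G(\Delta_G^m)$ to the reader, whereas you do the reverse; your explicit finite-$N$ truncation before letting $t\to\infty$ is precisely the right adaptation of the paper's $\sup$-argument to the geometric $\ell_p$ setting.
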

\begin{proof}
We prove the theorem for the space $C_{\infty}^G(\Delta_G^m)$ only because the proof for the space $C_p^G(\Delta_G^p)$ runs along the same line. Let $(x^r)$ be a Cauchy sequence in $C_{\infty}^G(\Delta_G^m)$, where $x^r = (x_1^r, x_2^r, \dots) \in C_{\infty}^G(\Delta_G^m)$ for each $r = 1, 2, \dots$. Then,
\begin{equation*}
\lVert x^r \ominus x^t\lVert_{\infty}^G \xrightarrow[]{\text{G}} 1
\end{equation*}
as $r$ and $t$ tends to $\infty$. That is,
\begin{equation*}
\leftidx{_G}\sum\limits_{i=1}^{m} \lvert x_i^r \ominus x_i^t\rvert_G \oplus \sup\limits_{n}\left\lvert (e\oslash e^n)\odot \leftidx{_G}\sum\limits_{k=1}^{n}\Delta_G^m (x_k^r \ominus x_k^t)\right\rvert_G \xrightarrow[]{\text{G}} 1
\end{equation*}
as $r$ and $t$ tends to $\infty$. This implies that
\begin{equation*}
\leftidx{_G}\sum\limits_{i=1}^{m} \lvert x_i^r \ominus x_i^t \rvert_G  \xrightarrow[]{\text{G}} 1 \ \text{as well as} \ \sup\limits_{n}\left\lvert (e\oslash e^n)\odot \leftidx{_G}\sum\limits_{k=1}^{n}\Delta_G^m (x_k^r \ominus x_k^t)\right\rvert_G \xrightarrow[]{\text{G}} 1
\end{equation*}
as $r$ and $t$ tends to $\infty$ because of the property 2.1(ix). Consequently,
\begin{equation}\label{tend_completenessmod1}
\lvert x_i^r \ominus x_i^t \rvert_G \xrightarrow[]{\text{G}} 1
\end{equation}
for $i = 1, 2, \dots, m$ and
\begin{equation}\label{tend_completenessmod2}
\left\lvert (e\oslash e^n)\odot \leftidx{_G}\sum\limits_{k=1}^{n}\Delta_G^m (x_k^r \ominus x_k^t)\right\rvert_G \xrightarrow[]{\text{G}} 1
\end{equation}
for all $n \in \mathbb{N}$ when $r$ and $t$ tends to $\infty$. Putting $n = 1, 2, \dots$ in (\ref{tend_completenessmod2}) and applying (\ref{tend_completenessmod1}), we get
\begin{equation*}
\lvert x_k^r \ominus x_k^t\rvert_G \xrightarrow[]{\text{G}} 1
\end{equation*}
for each $k$ when $r$ and $t$ tends to infinity. Thus, for each $k$ the sequence $(x_k^r)_{r=1}^{\infty}$ is a Cauchy sequence in $\mathbb{R}(G)$. Since $\mathbb{R}(G)$ is complete, the sequence $(x_k^r)_{r=1}^{\infty}$ converges, that is $x_k^r \xrightarrow[]{\text{G}} x_k$ (say) for each $k$ as $r$ tends to infinity. As $(x^r)$ is a Cauchy sequence, there exists a natural number $N$ for each $\epsilon > 1$ such that
\begin{equation*}
\lVert x^r \ominus x^t\lVert_{\infty}^G < \epsilon
\end{equation*}
for all $r, t \geq N$. Hence,
\begin{equation}\label{inequality_completenessepsilon1}
\leftidx{_G}\sum\limits_{i=1}^{m} \lvert x_i^r \ominus x_i^t \rvert_G < \epsilon  \ \text{and} \ \sup\limits_n\left\lvert (e\oslash e^n)\odot \leftidx{_G}\sum\limits_{k=1}^{n}\Delta_G^m (x_k^r \ominus x_k^t)\right\rvert_G < \epsilon
\end{equation}
for all $r, t \geq N$. Fix $r$ and let $t$ tends to infinity in (\ref{inequality_completenessepsilon1}), we get
\begin{equation}\label{inequality_completenessepsilon2}
\leftidx{_G}\sum\limits_{i=1}^{m} \lvert x_i^r \ominus x_i \rvert_G < \epsilon  \ \text{and} \ \sup\limits_n\left\lvert (e\oslash e^n)\odot \leftidx{_G}\sum\limits_{k=1}^{n}\Delta_G^m (x_k^r \ominus x_k)\right\rvert_G < \epsilon
\end{equation}
for all $r \geq N$. This shows that
\begin{equation*}
\lVert x^r \ominus x\rVert_{\infty}^G < \epsilon^2
\end{equation*}
for all $r \geq N$. Thus, the sequence $(x^r)$ converges to the sequence $x=(x_k)$. Now, we need to show that the sequence $x=(x_k) \in C_{\infty}^G(\Delta_G^m)$. For this, we consider
\begin{align*}
&\left\lvert (e \oslash e^n)\odot \leftidx{_G}\sum\limits_{k=1}^{n}\Delta_G^m x_k \right\rvert_G\\
&=\left\lvert (e \oslash e^n)\odot \leftidx{_G}\sum\limits_{k=1}^{n}\Delta_G^m (x_k\oplus x_k^N \ominus x_k^N) \right\rvert_G \\
&\leq\left\lvert (e \oslash e^n)\odot \leftidx{_G}\sum\limits_{k=1}^{n}\Delta_G^m x_k^N \right\rvert_G \oplus \left\lvert (e \oslash e^n)\odot \leftidx{_G}\sum\limits_{k=1}^{n}\Delta_G^m (x_k^N\ominus x_k )\right\rvert_G.
\end{align*}
From the Inequalities (\ref{inequality_completenessepsilon2}) and keeping in view that the sequence $(x^N) \in C_{\infty}^G(\Delta_G^m)$, we conclude that $(x_k) \in C_{\infty}^G(\Delta_G^m)$. Therefore, the space $C_{\infty}^G(\Delta_G^m)$ is a Banach space.
\end{proof}

\begin{lemma}\label{lemma_pqG}
Let $(a_k)$ be a sequence in $\mathbb{R}(G)$ and $0< p < q$, then the following inequality holds:
\begin{equation*}
\left( \leftidx{_G}\sum\limits_{k=1}^{n}\lvert a_k\rvert_G^{q_G}\right)^{(1/q)_G} \leq \left( \leftidx{_G}\sum\limits_{k=1}^{n}\lvert a_k\rvert_G^{p_G}\right)^{(1/p)_G}.
\end{equation*}
\end{lemma}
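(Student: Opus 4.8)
The plan is to transfer the statement to the classical setting by applying the generator's inverse (that is, the natural logarithm) to both sides, and then to invoke the classical Jessen inequality, Lemma~\ref{lemma_pq}. The three facts I will use are: (1) for $u,v\in\mathbb{R}(G)$ one has $u\leq v$ if and only if $\ln u\leq\ln v$; (2) geometric summation is ordinary multiplication, so $\ln\bigl(\leftidx{_G}\sum_{k=1}^{n}u_k\bigr)=\sum_{k=1}^{n}\ln u_k$; and (3) geometric exponentiation with a real exponent $r$ satisfies $\ln\bigl(u^{r_G}\bigr)=(\ln u)^{r}$, which is immediate from the definition $u^{r_G}=\exp(\ln u)^{r}$. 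Because $\ln$ is strictly increasing, it suffices to prove that $\ln$ of the left-hand side is $\leq\ln$ of the right-hand side.

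First I would set $b_k:=\ln\lvert a_k\rvert_G=\lvert\ln a_k\rvert$ for $k=1,\dots,n$. Since the geometric modulus is always at least $1$, each $b_k\geq 0$, so every real power $b_k^{p}$, $b_k^{q}$ appearing below (and the fractional powers $(\cdot)^{1/p}$, $(\cdot)^{1/q}$) is well defined and nonnegative. Using facts (2) and (3) above,
\[
\ln\left(\left(\leftidx{_G}\sum\limits_{k=1}^{n}\lvert a_k\rvert_G^{q_G}\right)^{(1/q)_G}\right)=\left(\sum\limits_{k=1}^{n}\bigl(\ln\lvert a_k\rvert_G\bigr)^{q}\right)^{1/q}=\left(\sum\limits_{k=1}^{n}b_k^{q}\right)^{1/q},
\]
and, in exactly the same way, $\ln$ of the right-hand side of the claimed inequality equals $\bigl(\sum_{k=1}^{n}b_k^{p}\bigr)^{1/p}$.

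It then remains to prove $\bigl(\sum_{k=1}^{n}b_k^{q}\bigr)^{1/q}\leq\bigl(\sum_{k=1}^{n}b_k^{p}\bigr)^{1/p}$ for the nonnegative reals $b_1,\dots,b_n$ and $0<p<q$; since $\lvert b_k\rvert=b_k$, this is precisely Lemma~\ref{lemma_pq} applied to the finite sequence $(b_k)$. Combining this with the two displayed identities and fact (1) yields the asserted geometric inequality. I do not expect a genuine obstacle here: the only thing that needs care is the bookkeeping of the non-Newtonian operations, namely checking $\ln\bigl(u^{r_G}\bigr)=(\ln u)^{r}$ from the definition of geometric exponentiation and recording that $b_k\geq 0$ legitimizes the fractional powers; beyond that, everything reduces to Lemma~\ref{lemma_pq}.
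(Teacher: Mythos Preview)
Your proof is correct and follows essentially the same route as the paper: both arguments take the natural logarithm to convert the geometric expressions into classical ones, reduce the inequality to the classical Jessen inequality (Lemma~\ref{lemma_pq}) applied to $\lvert\ln a_k\rvert$, and then use the monotonicity of $\ln$ to transfer the conclusion back. Your write-up is in fact a bit more explicit about the bookkeeping (recording $b_k\geq 0$ and the identities for $\ln$ of geometric sums and powers), but the underlying argument is the same.
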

\begin{proof}
Let
\begin{equation*}
u = \left( \leftidx{_G}\sum\limits_{k=1}^{n}\lvert a_k\rvert_G^{q_G}\right)^{(1/q)_G}.
\end{equation*}
Converting the above equation in classical arithmetic, we get
\begin{equation*}
\ln u =  \left( \sum\limits_{k=1}^{n}  \lvert  \ln a_k\rvert^q \right)^{(1/q)}.
\end{equation*}
\allowdisplaybreaks
By Lemma \ref{lemma_pq}, we have
\begin{equation*}
\ln u \leq   \left( \sum\limits_{k=1}^{n} \lvert  \ln a_k\rvert^p\right)^{(1/p)}.
\end{equation*}
Now converting this inequality in geometric arithmetic, we get
\begin{equation*}
u \leq \left( \leftidx{_G}\sum\limits_{k=1}^{n}\lvert a_k\rvert_G^{p_G}\right)^{(1/p)_G}.
\end{equation*}
\allowdisplaybreaks
This proves the Lemma.
\end{proof}
We now prove some inclusion relations.

\begin{theorem}
If $1\leq p < q< \infty$, then the inclusion $C_p^G(\Delta_G^m) \subset C_q^G(\Delta_G^m)$ holds.
\end{theorem}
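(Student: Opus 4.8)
The plan is to reduce the statement to the geometric Jessen inequality of Lemma~\ref{lemma_pqG}, applied for each fixed $N$ to the finite string of geometric Ces\`{a}ro averages, together with the elementary fact that a geometric series all of whose terms are $\geq 1$ (geometrically) converges as soon as its geometric partial sums are bounded above in $\mathbb{R}(G)$.

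First I would set up notation. Let $x=(x_k)\in C_p^G(\Delta_G^m)$ and write
\[
a_n=(e\oslash e^n)\odot\leftidx{_G}\sum_{k=1}^{n}\Delta_G^m x_k,\qquad n\in\mathbb{N}.
\]
By the definition \eqref{myspace1}, the geometric series $\leftidx{_G}\sum_{n=1}^{\infty}\lvert a_n\rvert_G^{p_G}$ converges; denote its geometric sum by $M\in\mathbb{R}(G)$. Since $\lvert a_n\rvert_G^{p_G}\geq 1$ for every $n$ (property 2.1(x)), the geometric partial sums $\leftidx{_G}\sum_{n=1}^{N}\lvert a_n\rvert_G^{p_G}$ increase with $N$ and each of them is $\leq M$; hence $\bigl(\leftidx{_G}\sum_{n=1}^{N}\lvert a_n\rvert_G^{p_G}\bigr)^{(1/p)_G}\leq M^{(1/p)_G}$ for all $N$.

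Next, fix $N$ and apply Lemma~\ref{lemma_pqG} to the finite sequence $a_1,\dots,a_N$ with exponents $p<q$:
\[
\left(\leftidx{_G}\sum_{n=1}^{N}\lvert a_n\rvert_G^{q_G}\right)^{(1/q)_G}\leq\left(\leftidx{_G}\sum_{n=1}^{N}\lvert a_n\rvert_G^{p_G}\right)^{(1/p)_G}\leq M^{(1/p)_G}.
\]
Raising to the geometric power $q_G$ — an operation that preserves the geometric order on geometric reals $\geq 1$, since by property 2.1(vii) it amounts to the increasing map $t\mapsto t^{q}$ on $[0,\infty)$ at the level of logarithms — I obtain $\leftidx{_G}\sum_{n=1}^{N}\lvert a_n\rvert_G^{q_G}\leq\bigl(M^{(1/p)_G}\bigr)^{q_G}$ for every $N$. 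Again using $\lvert a_n\rvert_G^{q_G}\geq 1$, the geometric partial sums of $\leftidx{_G}\sum_{n=1}^{\infty}\lvert a_n\rvert_G^{q_G}$ are non-decreasing and bounded above by the fixed geometric real $\bigl(M^{(1/p)_G}\bigr)^{q_G}$; passing through the generator $\exp$ (so that $\ln$ of a geometric sum is the ordinary sum of the $\ln$'s of its summands), this is exactly the classical statement that a non-negative series with bounded partial sums converges. Hence the geometric series converges and $x\in C_q^G(\Delta_G^m)$, which gives $C_p^G(\Delta_G^m)\subseteq C_q^G(\Delta_G^m)$.

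The argument is essentially bookkeeping once Lemma~\ref{lemma_pqG} is in hand; the only point that deserves a line of justification is the passage from ``bounded geometric partial sums'' to ``convergent geometric series,'' and as indicated this is immediate after applying $\ln$. If strictness of the inclusion is wanted, it follows by transporting through $\exp$ the classical counterexample: a sequence whose $m$-th order Ces\`{a}ro differences behave, in logarithm, like $n^{-1/r}$ with $p<r\leq q$ lies in $C_q^G(\Delta_G^m)$ but not in $C_p^G(\Delta_G^m)$.
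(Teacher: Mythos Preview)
Your proof is correct and follows exactly the route the paper indicates: the paper's own proof merely states that the result follows from Lemma~\ref{lemma_pqG} and omits all details, so your argument is a faithful (and more explicit) fleshing-out of that one-line appeal to the geometric Jessen inequality. Your closing remark on strictness is extra---the paper's statement does not assert strict inclusion for this theorem.
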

\begin{proof}
The proof of this theorem easily follows from Lemma \ref{lemma_pqG}. So, we omit it.
\end{proof}

\begin{theorem}
If $1 \leq p < \infty$, then the inclusion $C_p^G(\Delta_G^{m-1}) \subset  C_p^G(\Delta_G^{m})$ holds strictly.
\end{theorem}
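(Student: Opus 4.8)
The plan is to establish the set inclusion $C_p^G(\Delta_G^{m-1})\subset C_p^G(\Delta_G^m)$ by a telescoping argument followed by a geometric estimate, and then to show that it is strict with one explicit sequence. First I would take $x=(x_k)\in C_p^G(\Delta_G^{m-1})$ and set $z_k:=\Delta_G^{m-1}x_k$. Since $\Delta_G$ is linear, unwinding the definition gives $\Delta_G^m x_k=z_k\ominus z_{k+1}$, and because the geometric summation is a telescoping product,
\[
\leftidx{_G}\sum\limits_{k=1}^{n}\Delta_G^m x_k=z_1\ominus z_{n+1}.
\]
Writing $w_n:=(e\oslash e^n)\odot\leftidx{_G}\sum\limits_{k=1}^{n}z_k$, the hypothesis $x\in C_p^G(\Delta_G^{m-1})$ reads $\leftidx{_G}\sum\limits_{n=1}^{\infty}\lvert w_n\rvert_G^{p_G}<\infty$. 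From the definition of $w_n$ one has $\leftidx{_G}\sum\limits_{k=1}^{n}z_k=e^{n}\odot w_n$, whence $z_1=w_1$ and $z_{n+1}=(e^{n+1}\odot w_{n+1})\ominus(e^{n}\odot w_n)$. Substituting this and using $(e\oslash e^{n})\odot(e^{n}\odot w_n)=w_n$, I would record the key identity
\[
(e\oslash e^{n})\odot\leftidx{_G}\sum\limits_{k=1}^{n}\Delta_G^m x_k
= w_n\ominus\bigl((e\oslash e^{n})\odot(e^{n+1}\odot w_{n+1})\bigr)\oplus\bigl((e\oslash e^{n})\odot w_1\bigr).
\]

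The next step is the estimate. I would apply the geometric triangular inequality, then Lemma~\ref{lemma_maddoxG} twice to the resulting geometric sum of three terms, together with the multiplicativity of $\lvert\cdot\rvert_G$ and the elementary bounds $\lvert(e\oslash e^{n})\odot(e^{n+1}\odot w_{n+1})\rvert_G\leq e^{2}\odot\lvert w_{n+1}\rvert_G$ (valid since $1<(n+1)/n\leq 2$ and $\lvert w_{n+1}\rvert_G\geq1$) and $\lvert(e\oslash e^{n})\odot w_1\rvert_G=(e\oslash e^{n})\odot\lvert w_1\rvert_G$. This produces an inequality of the shape
\[
\Bigl\lvert(e\oslash e^{n})\odot\leftidx{_G}\sum\limits_{k=1}^{n}\Delta_G^m x_k\Bigr\rvert_G^{p_G}
\leq e^{C_p}\odot\Bigl(\lvert w_n\rvert_G^{p_G}\oplus e^{2^{p}}\odot\lvert w_{n+1}\rvert_G^{p_G}\oplus(e\oslash e^{n})^{p_G}\odot\lvert w_1\rvert_G^{p_G}\Bigr),
\]
with $C_p$ depending only on $p$. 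Summing geometrically over $n$, the first two resulting series are finite (being dominated, up to multiplicative geometric constants, by $\leftidx{_G}\sum\limits_{n=1}^{\infty}\lvert w_n\rvert_G^{p_G}<\infty$), while the third equals $\lvert w_1\rvert_G^{p_G}\odot\leftidx{_G}\sum\limits_{n=1}^{\infty}(e\oslash e^{n})^{p_G}$, which is finite since the geometric $p$-series $\leftidx{_G}\sum\limits_{n=1}^{\infty}(e\oslash e^{n})^{p_G}$ converges. Hence $x\in C_p^G(\Delta_G^m)$, which proves the inclusion.

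For strictness I would take $x=(x_k)$ with $x_k=\exp\!\bigl(k^{m-1}\bigr)$. Because $k\mapsto k^{m-1}$ is a classical polynomial of degree $m-1$, its $(m-1)$-st classical difference is the nonzero constant $(-1)^{m-1}(m-1)!$, so $\Delta_G^{m-1}x_k=\exp\!\bigl((-1)^{m-1}(m-1)!\bigr)=:c$ is a fixed element of $\mathbb{R}(G)$, independent of $k$, with $\lvert c\rvert_G=\exp\bigl((m-1)!\bigr)>1$. Then $(e\oslash e^{n})\odot\leftidx{_G}\sum\limits_{k=1}^{n}\Delta_G^{m-1}x_k=(e\oslash e^{n})\odot c^{\,n}=c$ for every $n$, so $\leftidx{_G}\sum\limits_{n=1}^{\infty}\lvert c\rvert_G^{p_G}=\exp\bigl(\sum\limits_{n=1}^{\infty}((m-1)!)^{p}\bigr)=\infty$ and $x\notin C_p^G(\Delta_G^{m-1})$; on the other hand $\Delta_G^m x_k=\Delta_G^{m-1}x_k\ominus\Delta_G^{m-1}x_{k+1}=c\ominus c=1$ for all $k$, so every Cesàro average equals the geometric zero and trivially $x\in C_p^G(\Delta_G^m)$. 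Thus $x\in C_p^G(\Delta_G^m)\setminus C_p^G(\Delta_G^{m-1})$, and the inclusion is strict.

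The step I expect to be the main obstacle is the estimate: the key identity introduces two terms that are not supplied by the hypothesis series — the vanishing factor $(e\oslash e^{n})\odot w_1$ carrying the fixed initial value $z_1=\Delta_G^{m-1}x_1$, and the coefficient $e^{n+1}$ (rather than $e^{n}$) multiplying $w_{n+1}$. Controlling these two contributions is precisely where one must invoke the convergence of the geometric $p$-series $\leftidx{_G}\sum\limits_{n=1}^{\infty}(e\oslash e^{n})^{p_G}$; the remaining ingredients — the geometric triangular/Minkowski bookkeeping and the verification for the strictness example — are routine.
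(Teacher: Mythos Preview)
Your telescoping decomposition and the paper's direct split are equivalent: the paper writes $\leftidx{_G}\sum_{k=1}^{n}\Delta_G^{m}x_k=\leftidx{_G}\sum_{k=1}^{n}\Delta_G^{m-1}x_k\ominus\leftidx{_G}\sum_{k=1}^{n}\Delta_G^{m-1}x_{k+1}$ and applies the triangle inequality together with Lemma~\ref{lemma_maddoxG}; once one rewrites its shifted sum as $(e^{n+1}\odot w_{n+1})\ominus z_1$, one recovers exactly your three terms. The strictness witness $x_k=\exp(k^{m-1})$ is also the paper's.

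The step you flag as the obstacle is, however, a genuine gap at $p=1$. The geometric $p$-series $\leftidx{_G}\sum_{n\geq1}(e\oslash e^{n})^{p_G}$ equals $\exp\bigl(\sum_{n\geq1}n^{-p}\bigr)$, which is finite only for $p>1$; your bound on the $z_1$-term therefore fails when $p=1$. The paper does not avoid this difficulty but merely hides it, asserting without argument that $\leftidx{_G}\sum_{n}\bigl\lvert(e\oslash e^n)\odot\leftidx{_G}\sum_{k=1}^{n}\Delta_G^{m-1}x_{k+1}\bigr\rvert_G^{p_G}<\infty$. In fact the inclusion is \emph{false} at $p=1$: choose any $x$ with $\Delta_G^{m-1}x=(e,e^{-1},1,1,\dots)$. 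The Ces\`aro averages of $\Delta_G^{m-1}x$ are then $(e,1,1,\dots)$, so $x\in C_1^G(\Delta_G^{m-1})$; but $\Delta_G^{m}x=(e^{2},e^{-1},1,1,\dots)$ has partial geometric sums equal to $e$ for every $n\geq2$, giving Ces\`aro averages $e^{1/n}$ and hence $\leftidx{_G}\sum_{n\geq2}e^{1/n}=\exp\bigl(\sum_{n\geq2}n^{-1}\bigr)=\infty$, so $x\notin C_1^G(\Delta_G^{m})$. Your argument, like the paper's, is thus valid only for $1<p<\infty$.
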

\begin{proof}
Let a sequence $x = (x_k) \in C_p^G(\Delta_G^{m-1})$. Now we consider
\begin{align*}
&\left\lvert (e\oslash e^n)\odot \leftidx{_G}\sum\limits_{k=1}^{n} \Delta_G^m x_k \right\rvert_G \\
&= \left\lvert (e\oslash e^n)\odot \leftidx{_G}\sum\limits_{k=1}^{n} \Delta_G^{m-1} (x_k \ominus x_{k+1}) \right\rvert_G\\
&=\left\lvert (e\oslash e^n)\odot \leftidx{_G}\sum\limits_{k=1}^{n} \Delta_G^{m-1}x_k  \ominus (e\oslash e^n)\odot \leftidx{_G}\sum\limits_{k=1}^{n} \Delta_G^{m-1}x_{k+1}\right\rvert_G.\\
\end{align*}
The triangular inequality suggests that
\begin{align*}
&\left\lvert (e\oslash e^n)\odot \leftidx{_G}\sum\limits_{k=1}^{n} \Delta_G^m x_k \right\rvert_G \\
&\leq \left\lvert (e\oslash e^n)\odot \leftidx{_G}\sum\limits_{k=1}^{n} \Delta_G^{m-1} x_k \right\rvert_G \oplus \left\lvert (e\oslash e^n)\odot \leftidx{_G}\sum\limits_{k=1}^{n} \Delta_G^{m-1} x_{k+1} \right\rvert_G.
\end{align*}
From Lemma \ref{lemma_maddoxG}, we have
\begin{align*}
&\left\lvert (e\oslash e^n)\odot \leftidx{_G}\sum\limits_{k=1}^{n} \Delta_G^m x_k \right\rvert_G^{p_G} \\
&\leq M\left\{\left\lvert (e\oslash e^n)\odot \leftidx{_G}\sum\limits_{k=1}^{n} \Delta_G^{m-1} x_k \right\rvert_G^{p_G} \oplus \left\lvert (e\oslash e^n)\odot \leftidx{_G}\sum\limits_{k=1}^{n} \Delta_G^{m-1} x_{k+1} \right\rvert_G^{p_G}\right\},
\end{align*}
where $M = e^{2^{p-1}}$. Taking geometric summation from $n = 1$ to $s$, we get
\begin{align*}
&\leftidx{_G}\sum\limits_{n = 1}^{s}\left\lvert (e\oslash e^n)\odot \leftidx{_G}\sum\limits_{k=1}^{n} \Delta_G^m x_k \right\rvert_G^{p_G} \\
&\leq M\left\{\leftidx{_G}\sum\limits_{n = 1}^{s}\left\lvert (e\oslash e^n)\odot \leftidx{_G}\sum\limits_{k=1}^{n} \Delta_G^{m-1} x_k \right\rvert_G^{p_G} \oplus \leftidx{_G}\sum\limits_{n = 1}^{s}\left\lvert (e\oslash e^n)\odot \leftidx{_G}\sum\limits_{k=1}^{n} \Delta_G^{m-1} x_{k+1} \right\rvert_G^{p_G}\right\}.
\end{align*}
As $s \rightarrow \infty$, we obtain
\begin{align*}
&\leftidx{_G}\sum\limits_{n = 1}^{\infty}\left\lvert (e\oslash e^n)\odot \leftidx{_G}\sum\limits_{k=1}^{n} \Delta_G^m x_k \right\rvert_G^{p_G} \\
&\leq M\left\{\leftidx{_G}\sum\limits_{n = 1}^{\infty}\left\lvert (e\oslash e^n)\odot \leftidx{_G}\sum\limits_{k=1}^{n} \Delta_G^{m-1} x_k \right\rvert_G^{p_G} \oplus \leftidx{_G}\sum\limits_{n = 1}^{\infty}\left\lvert (e\oslash e^n)\odot \leftidx{_G}\sum\limits_{k=1}^{n} \Delta_G^{m-1} x_{k+1} \right\rvert_G^{p_G}\right\} \\
&< \infty.
\end{align*}
Hence, the inclusion $C_p^G(\Delta_G^{m-1}) \subset  C_p^G(\Delta_G^{m})$ holds. To show strictness of the inclusion, we consider the sequence $x = (e^{k^{m-1}})$.
Then
\begin{equation*}
\Delta_G^m x_k = \leftidx{_G}\sum\limits_{v=0}^{m} (\ominus e)^{v_G}\odot e^{\binom mv}\odot e^{(k+v)^{m-1}}.
\end{equation*}
Converting the above equation into classical arithmetic, we get
\begin{eqnarray*}
\Delta_G^m x_k &= &e^{\sum\limits_{v=0}^{m}(-1)^v {\binom mv} {(k+v)}^{m-1}}\\
                                 &=&e^{\Delta^m k^{m-1}} = e^0 = 1.
\end{eqnarray*}
Then
\begin{equation}\label{equation_example1a}
\leftidx{_G}\sum\limits_{n=1}^{\infty}\left \lvert (e\oslash e^n)\odot  \leftidx{_G}\sum\limits_{k=1}^{n}\Delta_G^m x_k\right\rvert_G^{p_G} =1 < \infty.
\end{equation}
This shows that $x=(e^{k^{m-1}})\in C_p^G(\Delta_G^m)$. Now we show that $x= (e^{k^{m-1}})\notin C_p^G(\Delta_G^{m-1})$. Converting $\Delta_G^{m-1}x_k$ into classical arithmetic, we get
\begin{equation*}
\Delta_G^{m-1}x_k = e^{\Delta^{m-1}k^{m-1}} = e^{(-1)^{m-1} (m-1)!}.
\end{equation*}
Then
\allowdisplaybreaks
\begin{align}
\leftidx{_G}\sum\limits_{n=1}^{\infty}\left \lvert (e\oslash e^n)\odot  \leftidx{_G}\sum\limits_{k=1}^{n}\Delta_G^{m-1} x_k\right\rvert_G^{p_G} &= \leftidx{_G}\sum\limits_{n=1}^{\infty}\left \lvert e^{1/n}\odot  \leftidx{_G}\sum\limits_{k=1}^{n}e^{(-1)^{m-1} (m-1)!}\right\rvert_G^{p_G}\nonumber\\
&= \leftidx{_G}\sum\limits_{n=1}^{\infty}\left \lvert e^{1/n}\odot e^{(-1)^{m-1} (m-1)!\,n}\right\rvert_G^{p_G}\nonumber\\
&= \leftidx{_G}\sum\limits_{n=1}^{\infty}\left \lvert  e^{\frac{1}{n}(-1)^{m-1} (m-1)!\,n}\right\rvert_G^{p_G}\nonumber\\
&= \left \lvert  e^{(-1)^{m-1} (m-1)!}\right\rvert_G^{p_G}\odot\leftidx{_G}\sum\limits_{n=1}^{\infty}e\nonumber\\
&= \left \lvert  e^{(-1)^{m-1} (m-1)!}\right\rvert_G^{p_G}\odot e^{\sum 1} \rightarrow \infty.\label{equation_example1b}
\end{align}
This implies that $x= (e^{k^{m-1}})\notin C_p^G(\Delta_G^{m-1})$. Thus $x = (e^{k^{m-1}})$ belongs to $C_p^G(\Delta_G^{m})$ but does not belong to $C_p^G(\Delta_G^{m-1})$. Hence the inclusion is strict.
\end{proof}
Similarly, the inclusion $C_{\infty}^G(\Delta_G^{m-1}) \subset  C_{\infty}^G(\Delta_G^{m})$ also holds strictly and strictness can be seen by considering the sequence $x = (e^{k^m})$ that belongs to $C_{\infty}^G(\Delta_G^{m})$ but does not belong to $C_{\infty}^G(\Delta_G^{m-1})$.

\section{Dual spaces and matrix transformations}
In this section, we determine $\alpha$-dual of the space  $C_{\infty}^G(\Delta_G^m)$ and study some matrix transformations. The $\alpha$- and $\beta$- duals of a sequence space $X$ in bigeometric calculus are denoted by $X^{\alpha}$ and $X^{\beta}$ and defined as
\begin{equation*}
X^{\alpha} = \left\{ a = (a_k) \in w(G): \leftidx{_G}\sum\limits_{k=1}^{\infty}\lvert a_k \odot x_k\rvert_G < \infty \ \text{for all} \ (x_k) \in X\right\}
\end{equation*}
and
\begin{equation*}
X^{\beta} = \left\{ a = (a_k) \in w(G): \leftidx{_G}\sum\limits_{k=1}^{\infty} a_k \odot x_k \ \text{converges for all} \ (x_k) \in X\right\}
\end{equation*}
respectively. We note that if two spaces $X$ and $Y$ are such that $X \subseteq Y$, then $Y^{\alpha} \subseteq X^{\alpha}$ and $Y^{\beta} \subseteq X^{\beta}$.
For $X = C_p^G(\Delta_G^m)$ or $C_{\infty}^G(\Delta_G^m)$, we define an operator $\upsilon : X \rightarrow X$ by $\upsilon(x) = (1, \dots , 1, x_{m+1}, x_{m+2}, \dots)$ for all $x = (x_k) \in X$. Consider the sets $\upsilon C_p^G(\Delta_G^m)$ and $\upsilon C_{\infty}^G(\Delta_G^m)$ as follows:
\begin{equation*}
\upsilon C_p^G(\Delta_G^m) = \left\{ x = (x_k) : x \in C_p^G(\Delta_G^m) \ \text{and} \ x_1 = x_2 = \dots = x_m = 1\right\}
\end{equation*}
and
\begin{equation*}
\upsilon C_{\infty}^G(\Delta_G^m) = \left\{ x = (x_k) : x \in C_{\infty}^G(\Delta_G^m) \ \text{and} \ x_1 = x_2 = \dots = x_m = 1\right\}.
\end{equation*}

\begin{lemma}\label{lemma_self1}
If $x = (x_k) \in \upsilon C_{\infty}^G(\Delta_G^m)$, then $\sup\limits_{k} e^{k^{-1}}\odot \lvert \Delta_G^{m-1}x_k\rvert_G < \infty$.
\end{lemma}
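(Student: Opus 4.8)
The plan is to exploit the telescoping that occurs when one geometrically sums the $m$-th order differences. Since the operator $\Delta_G^{m-1}$ is linear and commutes with the forward shift, one has $\Delta_G^m x_k = \Delta_G^{m-1}x_k \ominus \Delta_G^{m-1}x_{k+1}$, so that the geometric partial sum collapses:
\[
\leftidx{_G}\sum\limits_{k=1}^{n}\Delta_G^m x_k = \Delta_G^{m-1}x_1 \ominus \Delta_G^{m-1}x_{n+1}.
\]
Here I would invoke the hypothesis $x \in \upsilon C_{\infty}^G(\Delta_G^m)$, namely $x_1 = x_2 = \dots = x_m = 1$: writing $\Delta_G^{m-1}x_1 = \leftidx{_G}\sum_{v=0}^{m-1}(\ominus e)^{v_G}\odot e^{\binom{m-1}{v}}\odot x_{1+v}$, every summand involves one of $x_1,\dots,x_m$, i.e.\ the geometric zero $1$, hence $\Delta_G^{m-1}x_1 = 1$ (a one-line induction on the order of the operator gives the same conclusion). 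Therefore $\leftidx{_G}\sum_{k=1}^{n}\Delta_G^m x_k = \ominus\,\Delta_G^{m-1}x_{n+1}$.

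Next I would compute the geometric modulus of $(e\oslash e^n)\odot\leftidx{_G}\sum_{k=1}^n\Delta_G^m x_k$. By property 2.1(v), $e\oslash e^n = e^{1/n}$, which exceeds $1$, so $\lvert e\oslash e^n\rvert_G = e^{n^{-1}}$; combining this with $\lvert\ominus a\rvert_G = \lvert a\rvert_G$ and the multiplicativity $\lvert u\odot v\rvert_G = \lvert u\rvert_G\odot\lvert v\rvert_G$ from 2.1(xi) yields
\[
\left\lvert(e\oslash e^n)\odot\leftidx{_G}\sum\limits_{k=1}^{n}\Delta_G^m x_k\right\rvert_G = e^{n^{-1}}\odot\left\lvert\Delta_G^{m-1}x_{n+1}\right\rvert_G .
\]
Since $x\in C_{\infty}^G(\Delta_G^m)$, the left-hand side is bounded in $n$, so $L := \sup_n e^{n^{-1}}\odot\lvert\Delta_G^{m-1}x_{n+1}\rvert_G < \infty$; equivalently $\sup_{k\geq 2} e^{(k-1)^{-1}}\odot\lvert\Delta_G^{m-1}x_k\rvert_G < \infty$.

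Finally I would transfer the shifted bound to the one asserted. Writing the geometric product out, $e^{k^{-1}}\odot b = b^{1/k}$ for $b\in\mathbb{R}(G)$; since $\lvert\Delta_G^{m-1}x_k\rvert_G\geq 1$ and $1/k\leq 1/(k-1)$ for $k\geq 2$, monotonicity of $t\mapsto b^{t}$ gives $e^{k^{-1}}\odot\lvert\Delta_G^{m-1}x_k\rvert_G \leq e^{(k-1)^{-1}}\odot\lvert\Delta_G^{m-1}x_k\rvert_G \leq L$ for every $k\geq 2$, while for $k=1$ the term equals $e\odot\lvert\Delta_G^{m-1}x_1\rvert_G = e\odot 1 = 1$. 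Taking the supremum over all $k$ gives $\sup_k e^{k^{-1}}\odot\lvert\Delta_G^{m-1}x_k\rvert_G\leq \max\{1,L\}<\infty$, which is the claim. I do not anticipate a real obstacle; the only delicate points are justifying the telescoping identity (it rests on $\Delta_G^{m-1}$ commuting with the shift, together with the vanishing of $\Delta_G^{m-1}x_1$) and being careful that the index translation $n\mapsto k-1$ can only weaken the estimate, which is exactly because every modulus in sight is $\geq 1$.
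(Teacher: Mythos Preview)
Your argument is correct and mirrors the paper's proof almost exactly: both telescope $\leftidx{_G}\sum_{k=1}^{n}\Delta_G^m x_k$ to $\ominus\,\Delta_G^{m-1}x_{n+1}$ using $\Delta_G^{m-1}x_1=1$, extract the bound $\sup_n e^{n^{-1}}\odot\lvert\Delta_G^{m-1}x_{n+1}\rvert_G<\infty$, and then pass from the shifted index to the stated one via $1/k\le 1/(k-1)$ together with $\lvert\cdot\rvert_G\ge 1$. The only cosmetic difference is that the paper writes the last step as $e^{n^{-1}}=e^{(1-1/n)(n-1)^{-1}}\le e^{(n-1)^{-1}}$, whereas you phrase it via monotonicity of $t\mapsto b^{t}$; these are the same observation.
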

\begin{proof}
Let $x = (x_k) \in \upsilon C_{\infty}^G(\Delta_G^m)$, then
\begin{equation}\label{inequality_definitionmuinfinity}
\sup\limits_n \left\lvert (e\oslash e^n)\odot \leftidx{_G}\sum\limits_{k=1}^{n} \Delta_G^m x_k\right\rvert_G < \infty.
\end{equation}
Now, we consider
\begin{eqnarray*}
\leftidx{_G}\sum\limits_{k=1}^{n} \Delta_G^m x_k &=& \leftidx{_G}\sum\limits_{k=1}^{n} \Delta_G^{m-1}( x_k\ominus x_{k+1}) \\
                                                                                        &=& \leftidx{_G}\sum\limits_{k=1}^{n} \left( \Delta_G^{m-1} x_k \ominus \Delta_G^{m-1} x_{k+1}\right) \\
                                                                                        &=& \Delta_G^{m-1}x_1 \ominus \Delta_G^{m-1} x_{n+1}.
\end{eqnarray*}
As $x_1 = x_2 = \dots = x_m = 1$, we get $\Delta_G^{m-1}x_1 = 1$. Therefore,
\begin{equation*}
\leftidx{_G}\sum\limits_{k=1}^{n} \Delta_G^m x_k =\ominus \Delta_G^{m-1} x_{n+1}.
\end{equation*}
Putting this value in the Inequality (\ref{inequality_definitionmuinfinity}), we get
\begin{equation*}
\sup\limits_n \left\lvert (e\oslash e^n)\odot \Delta_G^{m-1}x_{n+1}\right\rvert_G < \infty.
\end{equation*}
Using the property 2.1(xi), we have
\begin{equation*}
\sup\limits_n \left\lvert e\oslash e^n\right\rvert_G \odot \left\lvert \Delta_G^{m-1}x_{n+1}\right\rvert_G < \infty.
\end{equation*}
Since $e\oslash e^n = e^{n^{-1}} > 1$, so we get
\begin{equation}\label{expression_supnenminus1}
\sup\limits_n e^{n^{-1}} \odot \left\lvert \Delta_G^{m-1}x_{n+1}\right\rvert_G < \infty.
\end{equation}
Now,
\begin{align}
\sup\limits_n e^{n^{-1}}\odot \left\lvert \Delta_G^{m-1}x_n\right\rvert_G & = \sup\limits_n e^{(1-\frac{1}{n})(n-1)^{-1}}\odot \left\lvert\Delta_G^{m-1}x_n\right\rvert_G\nonumber\\
&\leq \sup\limits_n e^{(n-1)^{-1}}\odot \left\lvert \Delta_G^{m-1}x_n\right\rvert_G.\label{inequality_nminusonenminusone}
\end{align}
From \eqref{expression_supnenminus1} and \eqref{inequality_nminusonenminusone}, we get
\begin{equation*}
\sup\limits_n e^{n^{-1}}\odot \left\lvert \Delta_G^{m-1}x_n\right\rvert_G< \infty.
\end{equation*}
Replacing $n$ by $k$, we get the result.
\end{proof}

\begin{lemma}\label{lemma_self2}
If a sequence $x=(x_k)\in \upsilon C_{\infty}^G(\Delta_G^m)$, then $\sup\limits_k e^{k^{-m}}\odot \lvert x_k \rvert_G < \infty$.
\end{lemma}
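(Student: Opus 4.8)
The plan is to feed the output of Lemma \ref{lemma_self1} directly into Corollary \ref{corollary_khanekminus1}. Since $x = (x_k) \in \upsilon C_{\infty}^G(\Delta_G^m)$, Lemma \ref{lemma_self1} already yields
\[
\sup\limits_k e^{k^{-1}} \odot \lvert \Delta_G^{m-1} x_k \rvert_G < \infty ,
\]
which is exactly the hypothesis of Corollary \ref{corollary_khanekminus1}. Its conclusion, $\sup\limits_k e^{k^{-m}} \odot \lvert x_k \rvert_G < \infty$, is precisely what we want, so the proof is immediate.

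If a self-contained derivation is preferred, the same conclusion can be obtained by iterating Lemma \ref{lemma_khanekminusi} a total of $m-1$ times. Starting from $\sup\limits_k e^{k^{-1}} \odot \lvert \Delta_G^{m-1} x_k \rvert_G < \infty$ (the case $i = 1$), one gets successively
\[
\sup\limits_k e^{k^{-(i+1)}} \odot \lvert \Delta_G^{m-(i+1)} x_k \rvert_G < \infty \qquad \text{for } i = 1, 2, \dots, m-1 ,
\]
and the final step $i = m-1$ gives $\sup\limits_k e^{k^{-m}} \odot \lvert \Delta_G^{0} x_k \rvert_G = \sup\limits_k e^{k^{-m}} \odot \lvert x_k \rvert_G < \infty$, since $\Delta_G^{0} x_k = x_k$.

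There is essentially no obstacle in the statement itself: all the genuine work was already done in Lemma \ref{lemma_self1}, where the geometric Ces\`aro sum $\leftidx{_G}\sum_{k=1}^n \Delta_G^m x_k$ telescopes to $\ominus \Delta_G^{m-1} x_{n+1}$ (using $\Delta_G^{m-1} x_1 = 1$) and boundedness is transferred from the Ces\`aro means to $\Delta_G^{m-1} x_{n+1}$. The only minor point to watch in the iterative version is the indexing — one must keep $i + 1 \le m$ so that $\Delta_G^{m-(i+1)}$ is meaningful — and this is already guaranteed by the constraint $1 \le i < m$ in Lemma \ref{lemma_khanekminusi}.
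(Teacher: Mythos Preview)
Your proof is correct and follows exactly the same approach as the paper: apply Lemma~\ref{lemma_self1} to obtain $\sup_k e^{k^{-1}}\odot\lvert \Delta_G^{m-1}x_k\rvert_G<\infty$, then invoke Corollary~\ref{corollary_khanekminus1}. The additional iterative version via Lemma~\ref{lemma_khanekminusi} is a nice unpacking of how Corollary~\ref{corollary_khanekminus1} is itself proved, but it is not needed here.
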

\begin{proof}
Let $x=(x_k)\in \upsilon C_{\infty}^G(\Delta_G^m)$, then from Lemma \ref{lemma_self1} $\sup\limits_{k} e^{k^{-1}}\odot \lvert \Delta_G^{m-1}x_k\rvert_G < \infty$, which then by Corollary \ref{corollary_khanekminus1} turns out to be $\sup\limits_k e^{k^{-m}}\odot\lvert x_k \rvert_G < \infty$.
\end{proof}

\begin{theorem}
$\left[\upsilon C_{\infty}^G(\Delta_G^m)\right]^{\alpha} = \left\{ a=(a_k): \leftidx{_G}\sum\limits_{k=1}^{\infty} e^{k^m}\odot \lvert a_k \rvert_G < \infty \right\}$.
\end{theorem}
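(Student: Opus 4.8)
The plan is to prove the two inclusions separately. Throughout write $D=\left\{ a=(a_k): \leftidx{_G}\sum\limits_{k=1}^{\infty} e^{k^m}\odot \lvert a_k \rvert_G < \infty \right\}$ for the set on the right, and note that, passing to classical arithmetic through the generator $\exp$, the condition defining $D$ is simply $\sum_{k=1}^{\infty} k^{m}\lvert \ln a_k\rvert<\infty$.

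First I would show $D\subseteq \left[\upsilon C_{\infty}^G(\Delta_G^m)\right]^{\alpha}$. Fix $a=(a_k)\in D$ and an arbitrary $x=(x_k)\in \upsilon C_{\infty}^G(\Delta_G^m)$. By Lemma \ref{lemma_self2} there is $L\in\mathbb{R}(G)$ with $e^{k^{-m}}\odot\lvert x_k\rvert_G\le L$ for every $k$; unwinding the geometric exponentiation this reads $\lvert x_k\rvert_G\le e^{k^m}\odot L$. Using $\lvert a_k\odot x_k\rvert_G=\lvert a_k\rvert_G\odot\lvert x_k\rvert_G$ together with the monotonicity and commutativity of geometric multiplication we get
\[
\lvert a_k\odot x_k\rvert_G\le L\odot\left(e^{k^m}\odot\lvert a_k\rvert_G\right),
\]
and then, by monotonicity of the geometric sum and the distributivity $L\odot\leftidx{_G}\sum b_k=\leftidx{_G}\sum(L\odot b_k)$,
\[
\leftidx{_G}\sum\limits_{k=1}^{\infty}\lvert a_k\odot x_k\rvert_G\le L\odot\leftidx{_G}\sum\limits_{k=1}^{\infty}e^{k^m}\odot\lvert a_k\rvert_G<\infty .
\]
Since $x$ was arbitrary, $a\in\left[\upsilon C_{\infty}^G(\Delta_G^m)\right]^{\alpha}$.

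Next I would prove $\left[\upsilon C_{\infty}^G(\Delta_G^m)\right]^{\alpha}\subseteq D$ by testing against a single well-chosen sequence. Let $a=(a_k)$ lie in the $\alpha$-dual and put $x=(x_k)$ with $x_k=e^{(k-1)(k-2)\cdots(k-m)}$. One factor vanishes for each $k=1,\dots,m$, so $x_1=\dots=x_m=1$. Since $(k-1)(k-2)\cdots(k-m)$ is a monic polynomial in $k$ of degree exactly $m$, converting to classical arithmetic gives $\Delta_G^m x_k=e^{\Delta^m[(k-1)\cdots(k-m)]}=e^{(-1)^m m!}$, a fixed element of $\mathbb{R}(G)$; hence $\leftidx{_G}\sum\limits_{k=1}^{n}\Delta_G^m x_k=e^{(-1)^m m!\,n}$ and $(e\oslash e^n)\odot\leftidx{_G}\sum\limits_{k=1}^{n}\Delta_G^m x_k=e^{(-1)^m m!}$ for all $n$, so the supremum of its geometric modulus equals $e^{m!}<\infty$. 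Thus $x\in\upsilon C_{\infty}^G(\Delta_G^m)$, and therefore $\leftidx{_G}\sum_{k=1}^{\infty}\lvert a_k\odot x_k\rvert_G<\infty$. In classical arithmetic this is exactly $\sum_{k=1}^{\infty}(k-1)(k-2)\cdots(k-m)\,\lvert\ln a_k\rvert<\infty$, and since $\lvert\ln a_k\rvert\ge 0$ and $(k-1)(k-2)\cdots(k-m)/k^m\to 1$, a limit-comparison argument gives $\sum_{k=1}^{\infty}k^m\lvert\ln a_k\rvert<\infty$, i.e. $a\in D$.

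I expect the only genuine point to be the construction in the second inclusion: the test sequence must simultaneously (i) have its first $m$ coordinates equal to $1$ so that it lies in $\upsilon C_{\infty}^G(\Delta_G^m)$, (ii) have $\Delta_G^m x_k$ bounded along Cesàro means, which forces $\ln x_k$ to be a polynomial of degree exactly $m$ vanishing at $1,\dots,m$, and (iii) grow fast enough that the summability of $\leftidx{_G}\sum\lvert a_k\odot x_k\rvert_G$ recovers the full weight $e^{k^m}$ rather than something strictly smaller; the discrepancy between $(k-1)\cdots(k-m)$ and $k^m$ is exactly what is absorbed by the limit-comparison step, which is legitimate only because $\lvert\ln a_k\rvert\ge 0$. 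The remaining ingredients — the identities for $\odot,\oslash,\lvert\cdot\rvert_G$ and geometric exponentiation, and the monotonicity and distributivity of the geometric sum — are routine consequences of the material in Section 2.
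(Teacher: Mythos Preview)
Your proof is correct and follows the same two-inclusion strategy as the paper, invoking Lemma~\ref{lemma_self2} for the inclusion $D\subseteq[\upsilon C_\infty^G(\Delta_G^m)]^\alpha$ exactly as the paper does. The only real difference is in the test sequence for the reverse inclusion: the paper takes $x_k=1$ for $k\le m$ and $x_k=e^{k^m}$ for $k>m$, which yields the exact weight $e^{k^m}$ immediately (so no comparison argument is needed) but whose membership in $\upsilon C_\infty^G(\Delta_G^m)$ is merely asserted and requires handling the first $m$ ``irregular'' values of $\Delta_G^m x_k$. Your choice $x_k=e^{(k-1)\cdots(k-m)}$ has the opposite trade-off: the vanishing of the first $m$ coordinates and the constancy of $\Delta_G^m x_k$ are built in, so membership is transparent, at the price of the limit-comparison step to pass from $(k-1)\cdots(k-m)$ to $k^m$. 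Both routes are sound and of comparable length.
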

\begin{proof}
We consider $U = \left\{ a=(a_k): \leftidx{_G}\sum\limits_{k=1}^{\infty} e^{k^m}\odot \lvert a_k \rvert_G < \infty \right\}$.  Let $a = (a_k) \in U$; then for any $x = (x_k)\in \upsilon C_{\infty}^G(\Delta_G^m)$, we have
\begin{eqnarray*}
\leftidx{_G}\sum\limits_{k=1}^{\infty} \lvert a_k \odot x_k\rvert_G &=& \leftidx{_G}\sum\limits_{k=1}^{\infty} e^{k^{m}}\odot \lvert a_k \rvert_G \odot \left( e^{k^{-m}}\odot\lvert x_k \rvert_G\right)\\
&\leq& \sup\limits_k \left( e^{k^{-m}}\odot\lvert x_k \rvert_G\right) \odot  \leftidx{_G}\sum\limits_{k=1}^{\infty} e^{k^{m}}\odot \lvert a_k \rvert_G
\end{eqnarray*}
which is finite because of Lemma \ref{lemma_self2}. Since $x = (x_k)$ is arbitrary, we conclude that $a =(a_k) \in \left[\upsilon C_{\infty}^G(\Delta_G^m)\right]^{\alpha}$. Hence,
\begin{equation}\label{inclusion_upsiloninfinityalpha1}
U \subseteq \left[\upsilon C_{\infty}^G(\Delta_G^m)\right]^{\alpha}.
\end{equation}
Conversely, let $a \in [\upsilon C_{\infty}^G (\Delta_G^m)]^{\alpha}$. Then, $\leftidx{_G}\sum\limits_{k=1}^{\infty}\lvert a_k \odot x_k \rvert_G < \infty$ for each $x = (x_k) \in \upsilon C_{\infty}^G (\Delta_G^m)$. Now, consider the sequence $x = (x_k)$ that is defined by
\begin{equation}\label{sequence_upsiloninfinityalpha}
x_k = \begin{cases} 1, \ k \leq m\\ e^{k^m}, \ k > m\end{cases} .
\end{equation}
The sequence given by \eqref{sequence_upsiloninfinityalpha} belongs to $\upsilon C_{\infty}^G(\Delta_G^m)$. Hence, $\leftidx{_G}\sum\limits_{k = m+ 1}^{\infty}\left\lvert a_k \odot e^{k^m}\right\rvert_G < \infty$. Consequently,
\begin{equation*}
\leftidx{_G}\sum\limits_{k=1}^{\infty} \left\lvert e^{k^m} \odot a_k\right\rvert_G = \leftidx{_G}\sum\limits_{k=1}^{m} \left\lvert e^{k^m} \odot a_k\right\rvert_G\oplus \leftidx{_G}\sum\limits_{k=m+1}^{\infty} \left\lvert e^{k^m} \odot a_k\right\rvert_G < \infty.
\end{equation*}
This implies that $a \in U$. Thus,
\begin{equation}\label{inclusion_upsiloninfinityalpha2}
\left[\upsilon C_{\infty}^G(\Delta_G^m)\right]^{\alpha} \subseteq U.
\end{equation}
Inclusions \eqref{inclusion_upsiloninfinityalpha1} and \eqref{inclusion_upsiloninfinityalpha2} prove the theorem.
\end{proof}

\begin{theorem}
$\left[ C_{\infty}^G(\Delta_G^m) \right]^{\alpha}=\left[\upsilon C_{\infty}^G(\Delta_G^m) \right]^{\alpha}.$
\end{theorem}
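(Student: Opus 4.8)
The statement is an equality of $\alpha$-duals, so the plan is to prove the two set inclusions separately, and the whole argument rests on the principle that $\alpha$-duality does not see the values of a sequence on a finite initial segment.

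For $\left[ C_{\infty}^G(\Delta_G^m) \right]^{\alpha} \subseteq \left[\upsilon C_{\infty}^G(\Delta_G^m) \right]^{\alpha}$ I would simply quote the monotonicity remark stated just before the theorem: since every member of $\upsilon C_{\infty}^G(\Delta_G^m)$ is in particular a member of $C_{\infty}^G(\Delta_G^m)$, we have $\upsilon C_{\infty}^G(\Delta_G^m) \subseteq C_{\infty}^G(\Delta_G^m)$, and hence $\left[ C_{\infty}^G(\Delta_G^m) \right]^{\alpha} \subseteq \left[\upsilon C_{\infty}^G(\Delta_G^m) \right]^{\alpha}$.

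For the reverse inclusion I would take $a = (a_k) \in \left[\upsilon C_{\infty}^G(\Delta_G^m) \right]^{\alpha}$ and an arbitrary $x = (x_k) \in C_{\infty}^G(\Delta_G^m)$, and split the geometric series as
\[
\leftidx{_G}\sum\limits_{k=1}^{\infty} \lvert a_k \odot x_k\rvert_G \;=\; \leftidx{_G}\sum\limits_{k=1}^{m} \lvert a_k \odot x_k\rvert_G \;\oplus\; \leftidx{_G}\sum\limits_{k=m+1}^{\infty} \lvert a_k \odot x_k\rvert_G .
\]
The first term is a geometric sum of finitely many numbers from $\mathbb{R}(G)$, hence finite. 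For the tail, I would note that $\upsilon(x) = (1,\dots,1,x_{m+1},x_{m+2},\dots)$ lies in $\upsilon C_{\infty}^G(\Delta_G^m)$ --- it is an element of $C_{\infty}^G(\Delta_G^m)$ because $\upsilon$ maps that space into itself (which one can also see directly from the telescoping identity $\leftidx{_G}\sum\limits_{k=1}^{n} \Delta_G^m x_k = \Delta_G^{m-1}x_1 \ominus \Delta_G^{m-1}x_{n+1}$, so that replacing the first $m$ coordinates alters the defining supremum only on finitely many indices $n$), and its first $m$ coordinates are the geometric zero $1$. Since $(\upsilon x)_k = x_k$ for $k \geq m+1$ and $a$ annihilates $\upsilon C_{\infty}^G(\Delta_G^m)$ in the $\alpha$-sense,
\[
\leftidx{_G}\sum\limits_{k=m+1}^{\infty} \lvert a_k \odot x_k\rvert_G \;=\; \leftidx{_G}\sum\limits_{k=m+1}^{\infty} \lvert a_k \odot (\upsilon x)_k\rvert_G \;\leq\; \leftidx{_G}\sum\limits_{k=1}^{\infty} \lvert a_k \odot (\upsilon x)_k\rvert_G \;<\; \infty .
\]
Adding the two pieces gives $\leftidx{_G}\sum\limits_{k=1}^{\infty} \lvert a_k \odot x_k\rvert_G < \infty$ for every $x \in C_{\infty}^G(\Delta_G^m)$, i.e.\ $a \in \left[ C_{\infty}^G(\Delta_G^m) \right]^{\alpha}$, which yields $\left[\upsilon C_{\infty}^G(\Delta_G^m) \right]^{\alpha} \subseteq \left[ C_{\infty}^G(\Delta_G^m) \right]^{\alpha}$ and completes the proof.

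I do not expect any real obstacle here: the computation is routine bookkeeping with geometric sums together with the triangle and monotonicity facts from Section~2. The only step worth an explicit sentence is the verification that $\upsilon(x) \in \upsilon C_{\infty}^G(\Delta_G^m)$, which is where the particular structure of the space (via the telescoping formula for the geometric Ces\`aro means of $\Delta_G^m x$) is actually used; if one prefers to avoid even that, it is already packaged in the fact, recorded in the text, that $\upsilon$ is a self-map of $C_{\infty}^G(\Delta_G^m)$.
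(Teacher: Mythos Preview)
Your proof is correct and follows essentially the same route as the paper: both inclusions are handled exactly as you describe --- the first by monotonicity of the $\alpha$-dual under $\upsilon C_{\infty}^G(\Delta_G^m) \subseteq C_{\infty}^G(\Delta_G^m)$, and the second by splitting $\leftidx{_G}\sum_{k=1}^{\infty}\lvert a_k\odot x_k\rvert_G$ at $k=m$ and applying the hypothesis on $a$ to the sequence $(1,\dots,1,x_{m+1},x_{m+2},\dots)\in \upsilon C_{\infty}^G(\Delta_G^m)$. If anything, your version is slightly more careful in justifying why $\upsilon(x)$ lies in $\upsilon C_{\infty}^G(\Delta_G^m)$, which the paper simply asserts.
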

\begin{proof}
Since $\upsilon C_{\infty}^G(\Delta_G^m)  \subset C_{\infty}^G(\Delta_G^m) $, we have $\left[ C_{\infty}^G(\Delta_G^m) \right]^{\alpha} \subset \left[\upsilon C_{\infty}^G(\Delta_G^m) \right]^{\alpha}$. Conversely, let $a = (a_k) \in \left[\upsilon C_{\infty}^G(\Delta_G^m) \right]^{\alpha}$, then $\leftidx{_G}\sum\limits_{k=1}^{\infty} \lvert a_k \odot x_k\rvert_G < \infty$ for all $x= (x_k) \in \upsilon C_{\infty}^G(\Delta_G^m)$. Now, consider any sequence $x' = (x'_k) \in C_{\infty}^G(\Delta_G^m)$, then the corresponding sequence $(1, 1, \dots , 1, x'_{m+1}, x'_{m+2}, \dots) \in \upsilon C_{\infty}^G(\Delta_G^m)$ and $\leftidx{_G}\sum\limits_{k = m+1}^{\infty} \lvert a_k \odot x'_k\rvert_G < \infty$. Thus,
\begin{equation*}
\leftidx{_G}\sum\limits_{k=1}^{\infty} \lvert a_k \odot x'_k\rvert_G = \leftidx{_G}\sum\limits_{k=1}^{m} \lvert a_k \odot x'_k\rvert_G \oplus \leftidx{_G}\sum\limits_{k=m+1}^{\infty} \lvert a_k \odot x'_k\rvert_G < \infty.
\end{equation*}
for all $x = (x'_k) \in C_{\infty}^G(\Delta_G^m)$. Therefore, the sequence $a = (a_k) \in \left[ C_{\infty}^G(\Delta_G^m) \right]^{\alpha}$ and $\left[\upsilon C_{\infty}^G(\Delta_G^m) \right]^{\alpha} \subseteq \left[ C_{\infty}^G(\Delta_G^m) \right]^{\alpha}$. Hence, the result.
\end{proof}

Let us denote the spaces of bounded, convergent and absolutely $p$-summable sequences in bigeometric calculus by $l_{\infty}^G$, $c^G$ and $l_p^G$ respectively. Then the next result tells us that under certain conditions on the matrix $A$, which transforms $l_{\infty}^G$ or $c^G$  to $C_p^G(\Delta_G^m)$. We state the theorem as follows:
\begin{theorem}\label{theorem_matrix1}
Let $E = l_{\infty}^G$ or $c^G$ and $A = (a_{nk})$ be an infinite matrix whose entries are geometric real numbers, then $A \in (E, C_p^G(\Delta_G^m))$ for $1 \leq  p < \infty$ if and only if
\begin{enumerate}[(i)]
\item $\leftidx{_G} \sum\limits_{k=1}^{\infty} \lvert a_{nk} \rvert_G < \infty$ and
\item $B \in (E, l_p^G)$
\end{enumerate}
hold, where $B = (b_{ik}) = \left(e^{1/i}\odot (\Delta_G^{m-1}a_{1k} \ominus \Delta_G^{m-1}a_{i+1,k})  \right)$.
\end{theorem}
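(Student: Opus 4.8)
The whole argument rests on one identity. For $x=(x_j)\in E$ write $(Ax)_k=\leftidx{_G}\sum\limits_{j=1}^{\infty}a_{kj}\odot x_j$ for the $k$-th coordinate of the transformed sequence. Whenever these series converge for all $k$ (which, as we verify below, amounts to condition (i) when $x$ ranges over $E$), the $n$-th term of the Ces\`{a}ro difference expression in \eqref{myspace1} formed from $Ax$ is nothing but $(Bx)_n$:
\begin{equation}\label{eq_matrixkey}
(e\oslash e^n)\odot\leftidx{_G}\sum\limits_{k=1}^{n}\Delta_G^m(Ax)_k=(Bx)_n\qquad\text{for all }n.
\end{equation}
Granting \eqref{eq_matrixkey}, the membership $Ax\in C_p^G(\Delta_G^m)$ is, by \eqref{myspace1}, equivalent to $\leftidx{_G}\sum\limits_{n=1}^{\infty}\lvert(Bx)_n\rvert_G^{p_G}<\infty$, i.e. to $Bx\in l_p^G$. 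Hence the proof splits into three parts: establishing \eqref{eq_matrixkey}, identifying (i) with the well-definedness of $Ax$ on $E$, and then reading off both implications.

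To establish \eqref{eq_matrixkey} I would use that $\Delta_G$ is linear, so $\Delta_G^m(Ax)_k=\Delta_G^{m-1}(Ax)_k\ominus\Delta_G^{m-1}(Ax)_{k+1}$, and the geometric sum telescopes exactly as in the opening lines of the proof of Lemma \ref{lemma_self1}:
\begin{equation*}
\leftidx{_G}\sum\limits_{k=1}^{n}\Delta_G^m(Ax)_k=\Delta_G^{m-1}(Ax)_1\ominus\Delta_G^{m-1}(Ax)_{n+1}.
\end{equation*}
Since $\Delta_G^{m-1}(Ax)_k=\leftidx{_G}\sum\limits_{v=0}^{m-1}(\ominus e)^{v_G}\odot e^{\binom{m-1}{v}}\odot(Ax)_{k+v}$ is a \emph{finite} geometric combination of the convergent series defining $(Ax)_k,\dots,(Ax)_{k+m-1}$, it may be combined termwise into $\Delta_G^{m-1}(Ax)_k=\leftidx{_G}\sum\limits_{j=1}^{\infty}(\Delta_G^{m-1}a_{kj})\odot x_j$, where now $\Delta_G^{m-1}$ acts on the row index of $A$. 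Taking $k=1$ and $k=n+1$, forming the geometric difference, and multiplying through by $e\oslash e^n=e^{1/n}$ yields
\begin{equation*}
(e\oslash e^n)\odot\leftidx{_G}\sum\limits_{k=1}^{n}\Delta_G^m(Ax)_k=\leftidx{_G}\sum\limits_{j=1}^{\infty}e^{1/n}\odot\bigl(\Delta_G^{m-1}a_{1j}\ominus\Delta_G^{m-1}a_{n+1,j}\bigr)\odot x_j=(Bx)_n,
\end{equation*}
which is \eqref{eq_matrixkey}.

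Next comes the role of (i). If $A\in(E,C_p^G(\Delta_G^m))$ then, in particular, $(Ax)_n=\leftidx{_G}\sum\limits_{j=1}^{\infty}a_{nj}\odot x_j$ converges for every $x\in E$, so the $n$-th row of $A$ lies in the $\beta$-dual of $E$; and this $\beta$-dual equals $l_1^G$ for both $E=l_\infty^G$ and $E=c^G$ — for $l_\infty^G$ one tests with the bounded sequence $x_j=e^{\mathrm{sgn}(\ln a_{nj})}$, which makes $a_{nj}\odot x_j=\lvert a_{nj}\rvert_G\ge 1$, and for $c^G$ one uses a geometric gliding-hump sequence — so (i) holds. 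Conversely, (i) guarantees that $(Ax)_k$ is a well-defined geometric real number for every $x\in E$ and every $k$, hence $Ax\in w(G)$ and \eqref{eq_matrixkey} is available.

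The two implications then follow mechanically. If $A\in(E,C_p^G(\Delta_G^m))$, then (i) holds by the previous paragraph, and for each $x\in E$ we have $\leftidx{_G}\sum\limits_{n=1}^{\infty}\lvert(Bx)_n\rvert_G^{p_G}<\infty$ by \eqref{eq_matrixkey} and $Ax\in C_p^G(\Delta_G^m)$; hence $Bx\in l_p^G$, which is (ii). Conversely, if (i) and (ii) hold, then (i) puts $Ax$ in $w(G)$, \eqref{eq_matrixkey} identifies its Ces\`{a}ro difference transform with $Bx$, and (ii) places $Bx$ in $l_p^G$; therefore $Ax\in C_p^G(\Delta_G^m)$ for every $x\in E$, i.e. $A\in(E,C_p^G(\Delta_G^m))$. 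The one step above that is not a routine unwinding of definitions is the identification $(l_\infty^G)^\beta=(c^G)^\beta=l_1^G$ used in the necessity of (i); I expect that — and in particular the gliding-hump construction for $c^G$ — to be the main obstacle, everything else reducing to the telescoping identity \eqref{eq_matrixkey} and bookkeeping.
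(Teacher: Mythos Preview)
Your proof is correct and takes essentially the same approach as the paper's: both hinge on the telescoping identity you call \eqref{eq_matrixkey} and on the identification $E^\beta = l_1^G$ for $E \in \{l_\infty^G, c^G\}$. The paper is terser --- it dismisses the sufficiency direction as ``trivial'' and simply asserts $E^\beta = l_1^G$ without the sign/gliding-hump justification you sketch --- but the structure and key steps are identical.
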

\begin{proof}
The sufficiency part of the theorem is trivial. To prove the necessity part, let us suppose that the matrix $A \in (E, C^G_p(\Delta_G^m))$ for $1 \leq p < \infty$. Then the series $A_n(x) = \leftidx{_G} \sum\limits_{k=1}^{\infty}a_{nk}\odot x_k$ converges for all $n$ and for all $x = (x_k) \in E$ and the sequence $(A_n(x)) \in C_p^G(\Delta_G^m)$. As the series $\leftidx{_G} \sum\limits_{k =1}^{\infty} a_{nk} \odot x_k$ converges for all $x  = (x_k) \in E$, the sequence $(a_{nk})_k \in E^{\beta} = l_{1}^G$. Thus the condition $(i)$ follows.  Since the sequence $(A_n(x)) \in C_p^G(\Delta_G^m)$, we get
\begin{align*}
&\leftidx{_G} \sum\limits_{i=1}^{\infty} \left \lvert e^{1/i} \odot \leftidx{_G} \sum\limits_{n=1}^{i} \Delta_G^m A_n (x)\right\rvert_G^{p_G} \\
&=\leftidx{_G} \sum\limits_{i=1}^{\infty} \left \lvert e^{1/i} \odot \leftidx{_G} \sum\limits_{n=1}^{i} (\Delta_G^{m-1} A_n (x)\ominus \Delta_G^{m-1}A_{n+1}(x))\right\rvert_G^{p_G} \\
&=\leftidx{_G} \sum\limits_{i=1}^{\infty} \left \lvert e^{1/i} \odot (\Delta_G^{m-1} A_1 (x)\ominus \Delta_G^{m-1}A_{i+1}(x))\right\rvert_G^{p_G} \\
&=  \leftidx{_G} \sum\limits_{i=1}^{\infty} \left \lvert  e^{1/i} \odot \leftidx{_G} \sum\limits_{k=1}^{\infty} (\Delta_G^{m-1}a_{1k} \ominus \Delta_G^{m-1}a_{i+1, k})\odot x_k\right \rvert_G^{p_G}\\
&= \leftidx{_G} \sum\limits_{i=1}^{\infty} \left\lvert \leftidx{_G} \sum\limits_{k=1}^{\infty} e^{1/i} \odot (\Delta_G^{m-1}a_{1k} \ominus \Delta_G^{m-1}a_{i+1, k})\odot x_k\right\rvert_G^{p_G} < \infty.
\end{align*}
Now if the matrix $B = (b_{ik})$ is such that  $b_{ik} = \left(e^{1/i}\odot (\Delta_G^{m-1}a_{1k} \ominus \Delta_G^{m-1}a_{i+1,k})  \right)$, then we have $\leftidx{_G} \sum\limits_{i=1}^{\infty} \left\lvert \leftidx{_G}\sum\limits_{k=1}^{\infty} b_{ik} \odot x_k\right \rvert_G^{p_G} < \infty$. Hence $B = (b_{ik}) \in (E, l_p^G)$, so the condition $(ii)$ follows.
\end{proof}

\begin{theorem}
Let $E = l_{\infty}^G$ or $c^G$ and $A = (a_{nk})$ be an infinite matrix whose entries are geometric real numbers, then $A \in (E, C_{\infty}^G(\Delta_G^m))$ if and only if
\begin{enumerate}[(i)]
\item $\leftidx{_G} \sum\limits_{k=1}^{\infty} \lvert a_{nk} \rvert_G < \infty$ and
\item $B \in (E, l_{\infty}^G)$
\end{enumerate}
hold, where $B = (b_{ik}) = \left(e^{1/i}\odot (\Delta_G^{m-1}a_{1k} \ominus \Delta_G^{m-1}a_{i+1,k})  \right)$.
\end{theorem}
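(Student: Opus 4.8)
The plan is to mirror, step for step, the proof of Theorem~\ref{theorem_matrix1}, replacing the target space $C_p^G(\Delta_G^m)$ by $C_{\infty}^G(\Delta_G^m)$ and the space $l_p^G$ by $l_{\infty}^G$ throughout. The algebraic heart of the argument — the telescoping identity for $\leftidx{_G}\sum\limits_{n=1}^{i}\Delta_G^m A_n(x)$ — is identical; only the outer ``$\sup$'' replaces the outer ``$\leftidx{_G}\sum$''.

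For sufficiency, assume (i) and (ii). Condition (i) says that for each fixed $n$ the row $(a_{nk})_k$ lies in $l_1^G=E^{\beta}$, so $A_n(x)=\leftidx{_G}\sum\limits_{k=1}^{\infty}a_{nk}\odot x_k$ converges for every $x=(x_k)\in E$. Using $\Delta_G^m A_n(x)=\Delta_G^{m-1}(A_n(x)\ominus A_{n+1}(x))$ and linearity of $\Delta_G^{m-1}$, the finite geometric sum over $n$ telescopes, giving $\leftidx{_G}\sum\limits_{n=1}^{i}\Delta_G^m A_n(x)=\Delta_G^{m-1}A_1(x)\ominus\Delta_G^{m-1}A_{i+1}(x)=\leftidx{_G}\sum\limits_{k=1}^{\infty}(\Delta_G^{m-1}a_{1k}\ominus\Delta_G^{m-1}a_{i+1,k})\odot x_k$. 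Hence $e^{1/i}\odot\leftidx{_G}\sum\limits_{n=1}^{i}\Delta_G^m A_n(x)=\leftidx{_G}\sum\limits_{k=1}^{\infty}b_{ik}\odot x_k$, and condition (ii) ($B\in(E,l_{\infty}^G)$) gives $\sup_i\bigl\lvert\leftidx{_G}\sum\limits_{k=1}^{\infty}b_{ik}\odot x_k\bigr\rvert_G<\infty$, i.e.\ $(A_n(x))\in C_{\infty}^G(\Delta_G^m)$. Thus $A\in(E,C_{\infty}^G(\Delta_G^m))$.

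For necessity, assume $A\in(E,C_{\infty}^G(\Delta_G^m))$. Then each $A_n(x)$ converges for all $x\in E$, so $(a_{nk})_k\in E^{\beta}=l_1^G$, which is condition (i). Moreover $(A_n(x))\in C_{\infty}^G(\Delta_G^m)$ means $\sup_i\bigl\lvert e^{1/i}\odot\leftidx{_G}\sum\limits_{n=1}^{i}\Delta_G^m A_n(x)\bigr\rvert_G<\infty$. Running the same telescoping and expansion as above — and pushing $e^{1/i}$ inside the sum over $k$ — this equals $\sup_i\bigl\lvert\leftidx{_G}\sum\limits_{k=1}^{\infty}b_{ik}\odot x_k\bigr\rvert_G<\infty$ for every $x\in E$, with $b_{ik}=e^{1/i}\odot(\Delta_G^{m-1}a_{1k}\ominus\Delta_G^{m-1}a_{i+1,k})$. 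This is exactly $B\in(E,l_{\infty}^G)$, which is condition (ii).

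The main obstacle — the same one implicitly used in Theorem~\ref{theorem_matrix1} — is justifying that the bigeometric series $\leftidx{_G}\sum\limits_{k=1}^{\infty}a_{nk}\odot x_k$ may be rearranged and combined so that the telescoping in $n$ and the expansion of $\Delta_G^{m-1}$ commute with the infinite summation over $k$. Condition (i) is what licenses this: it forces absolute geometric convergence of each row against a bounded (or convergent) $x$, so that passing to classical arithmetic via $\ln$ reduces the interchange to the classical dominated/absolutely convergent case, after which the manipulation is pure bookkeeping with $\oplus,\ominus,\odot$. I would also record, as in the $C_p^G$ case, that sufficiency needs no convergence input beyond (i) and (ii), since the telescoping is an exact identity between finite geometric sums.
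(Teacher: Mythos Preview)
Your proposal is correct and follows exactly the approach the paper intends: the paper's own proof is simply ``The proof of this theorem runs along the similar lines as that of the Theorem~\ref{theorem_matrix1}. So, we omit details of the proof,'' and you have carried out precisely that parallel argument, replacing $C_p^G(\Delta_G^m)$ and $l_p^G$ by $C_{\infty}^G(\Delta_G^m)$ and $l_{\infty}^G$ while keeping the same telescoping identity and the identification $E^{\beta}=l_1^G$. If anything, you supply more detail than the paper does, since you spell out the sufficiency direction (which Theorem~\ref{theorem_matrix1} dismisses as ``trivial'') and flag the interchange-of-summation issue that the paper leaves implicit.
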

\begin{proof}
The proof of this theorem runs along the similar lines as that of the Theorem \ref{theorem_matrix1}. So, we omit details of the proof.
\end{proof}

\section{Bigeometric Hermite interpolation}
In this section, we study how bigeometric calculus is useful to interpolate any function that is defined in $\mathbb{R}(G)$ and whose range lies in $\mathbb{R}(G)$. We give an interpolating formula in bigeometric calculus analogous to the Hermite interpolating formula in classical calculus. Let the values of a function $f$ and its derivative $f'$ are defined at $n+1$ distinct points $x_0, x_1, \dots , x_n$ on the interval $[a, b]$; then the Lagrange's form of classical Hermite polynomial is given by
\begin{equation*}
p(x) = \sum\limits_{i=0}^{n}Q_i(x)f(x_i) + \sum\limits_{i=0}^{n} {\hat{Q}}_i(x)f'(x_i),
\end{equation*}
where the polynomials
\begin{align*}
 Q_i(x) &= [1 - 2L'_{n,i}(x_i)(x-x_i)]L^2_{n,i}(x),\\
 {\hat{Q}}(x) &= (x-x_i)L^2_{n,i}(x)
 \end{align*}
  and the Lagrange's polynomials $L_{n,i}$ are defined by
  \begin{equation*}
  L_{n,i}(x) = \frac{(x - x_0) (x - x_1)\dots (x - x_{i-1}) (x - x_{i+1})\dots  (x - x_n)}{(x_i - x_0) (x_i - x_1)\dots (x_i -x_{i-1}) (x_i - x_{i+1})\dots  (x_i - x_n)}.
  \end{equation*}
  We derive an equivalent interpolating polynomial in bigeometric calculus.
\begin{theorem}
Let $f$ be a function such that $f(x_i)$ and $D_Gf(x_i)$ for $i = 0, 1, \dots , n$ are defined at each of the points $x_0, x_1, \dots , x_n$ in the geometric interval $[a, b]$. Then there is a unique bigeometric polynomial, $p_G$, of geometric degree at most $2n+1$ such that $p_G(x_i) = f(x_i)$ and $D_G\{p_G(x_i)\}=D_Gf(x_i)$ for each $i = 0, 1, \dots, n$.
\end{theorem}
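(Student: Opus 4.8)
The plan is to mimic the classical existence-and-uniqueness argument for Hermite interpolation, transported through the exponential generators that define bigeometric calculus. First I would build the candidate bigeometric Hermite polynomial directly by analogy: set
\begin{equation*}
p_G(x) = \left(\leftidx{_G}\prod_{i=0}^{n} Q_i^G(x) \odot f(x_i)\right) \oplus \left(\leftidx{_G}\prod_{i=0}^{n} \hat{Q}_i^G(x) \odot D_G f(x_i)\right),
\end{equation*}
where $Q_i^G$ and $\hat{Q}_i^G$ are the bigeometric analogues of the classical Hermite basis polynomials, themselves built from bigeometric Lagrange factors $L_{n,i}^G(x)$. The cleanest route is to pass to classical arithmetic via $x \mapsto \ln x$: if $g(t) = \ln f(e^t)$, then a bigeometric polynomial of geometric degree $\le 2n+1$ in $x$ corresponds exactly to a classical polynomial of degree $\le 2n+1$ in $t = \ln x$, and the interpolation conditions $p_G(x_i)=f(x_i)$, $D_G\{p_G(x_i)\} = D_G f(x_i)$ translate (using the identity $[D_Gf](a)=e^{af'(a)/f(a)}$ from the preliminaries, equivalently $(\ln\circ f\circ\exp)'(\ln a) = \tfrac{a f'(a)}{f(a)}$) into the classical Hermite conditions $q(\ln x_i) = g(\ln x_i)$ and $q'(\ln x_i) = g'(\ln x_i)$ on the transformed data.

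Given that translation, existence reduces to the classical Hermite existence theorem applied to the nodes $\ln x_0, \dots, \ln x_n$ (distinct because the $x_i$ are distinct and $\ln$ is injective) and the data $g(\ln x_i)$, $g'(\ln x_i)$; the resulting classical polynomial $q$ of degree $\le 2n+1$ pulls back to $p_G(x) = \exp(q(\ln x))$, which is the desired bigeometric polynomial. I would then verify directly that the explicit $Q_i^G, \hat{Q}_i^G$ constructed in geometric arithmetic are precisely the exponential images of the classical Hermite basis polynomials in the variable $t=\ln x$, so the closed form above genuinely satisfies the conditions; this is a routine but necessary bookkeeping step tracking how $+,\times$ become $\oplus,\odot$ and how the classical derivative conditions on $L_{n,i}^2$ become bigeometric-derivative conditions.

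For uniqueness, I would argue by contradiction in the transported picture: if $p_G$ and $\tilde p_G$ both satisfy the conditions, then $r = p_G \oslash \tilde p_G$ is a bigeometric polynomial of geometric degree $\le 2n+1$ with $r(x_i) = 1$ (the geometric zero) and $D_G\{r(x_i)\} = e$ (the geometric identity derivative) for each $i$; equivalently the classical polynomial $\ln r(e^t)$ has degree $\le 2n+1$ and a double root at each of the $n+1$ distinct points $\ln x_i$, hence at least $2n+2$ roots counted with multiplicity, forcing it to vanish identically, so $p_G = \tilde p_G$.

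The main obstacle, and the step I would be most careful about, is pinning down the correct notion of ``bigeometric polynomial of geometric degree at most $2n+1$'' and checking that the bigeometric derivative conditions really do correspond to first-order classical derivative (not higher-order) interpolation conditions after the $\ln$-substitution — in particular getting the chain-rule bookkeeping right so that $D_G\{p_G(x_i)\}=D_Gf(x_i)$ is equivalent to $q'(\ln x_i)=g'(\ln x_i)$ and not to some rescaled condition. Once that dictionary is established, both existence and uniqueness are immediate consequences of their classical counterparts, and the explicit Lagrange-type formula for $p_G$ follows by exponentiating the classical Hermite formula term by term.
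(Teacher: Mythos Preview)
Your strategy is sound and the outline would go through, but it differs in spirit from the paper's argument. The paper works \emph{entirely inside} geometric arithmetic: it writes down the bigeometric analogues $H_i(x)$ and $\hat H_i(x)$ of the classical Hermite basis polynomials directly (built from bigeometric Lagrange factors $T_{n,i}$), checks the four Kronecker-type identities $H_i(x_j)$, $\hat H_i(x_j)$, $D_G H_i(x_j)$, $D_G \hat H_i(x_j)$ by hand, and then forms $p_G$ as a geometric linear combination. Uniqueness is argued, as you do, by counting roots of the difference. Your route instead \emph{transports} the whole problem through the $\ln$/$\exp$ dictionary to the classical Hermite theorem and pulls the answer back. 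That is arguably cleaner and more conceptual (it makes explicit why nothing new can go wrong), while the paper's approach is more self-contained and does not invoke the classical result as a black box.

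A few symbolic slips to fix before your argument is airtight. In your displayed formula for $p_G$ the outer operation should be the geometric \emph{sum} $\leftidx{_G}\sum$, not $\leftidx{_G}\prod$: you are combining basis contributions additively in the $(\oplus,\odot)$-sense. In the uniqueness paragraph, the difference of two bigeometric polynomials is $r = p_G \ominus \tilde p_G$, not $p_G \oslash \tilde p_G$; geometric division $\oslash$ of polynomials is not a polynomial in general, and the paper's own argument uses $\ominus$. Correspondingly, the derivative condition at the nodes is $D_G\{r(x_i)\} = 1$ (the geometric zero), not $e$, since $D_G(p_G \ominus \tilde p_G) = D_G p_G \ominus D_G \tilde p_G$. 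With these corrections your transported root-counting argument matches the paper's exactly.
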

\begin{proof}
 Define bigeometric polynomials $ H_{i} (x)$ and ${\hat{H}}_i(x)$ of geometric degree
$2n+1$ as
\begin{equation*}
H_i(x) = [e \ominus e^2 \odot D_G \{T_{n,i}(x_i)\}\odot (x\ominus x_i)] \odot{ T_{n,i}}^{2_G}(x)
\end{equation*}
and
\begin{equation*}
{\hat{H}}_i(x)= (x\ominus x_i)\odot T_{n,i}^{2_G}(x),
\end{equation*}
where $T_{n,i}(x)$ is defined by
\begin{equation*}
T_{n,i}(x) = \frac{(x\ominus x_0)\odot (x\ominus x_1)\odot\dots \odot(x\ominus x_{i-1})\odot (x\ominus x_{i+1})\odot\dots \odot (x\ominus x_n)}{(x_i\ominus x_0)\odot (x_i\ominus x_1)\odot\dots \odot(x_i\ominus x_{i-1})\odot (x_i\ominus x_{i+1})\odot\dots \odot (x_i\ominus x_n)}{}_{\tiny{G}}.
\end{equation*}
Clearly,
\begin{eqnarray*}
&H_i(x_j)=\begin{cases} e; \ i=j\\ 1 ;\  otherwise \end{cases} ,   &{\hat{H}}_i(x_j)=1 \ \text{\emph{for all}} \ i \ \text{\emph{and}} \ j, \\
&D_G\{H_i(x_j)\}=1 \ \text{\emph{for all}} \ i \ \text{\emph{and}} \ j, &D_G\{ {\hat{H}}_i(x_j)\}=\begin{cases} e; \ i=j\\ 1 ;\  otherwise \end{cases}.
\end{eqnarray*}
Now we consider the polynomial
\begin{equation*}
p_G(x) = \leftidx{_G}\sum\limits_{i=0}^{n} H_i(x) \odot f(x_i) \oplus \leftidx{_G}\sum\limits_{i=0}^{n} {\hat{H}}_i(x)\odot D_G f(x_i).
\end{equation*}
Then, $p_G(x_i) = f(x_i)$ and $D_G \{p_G(x_i)\} = D_G f(x_i)$. This proves the existence part of the theorem. To show uniqueness of the polynomial $p_G$, if possible suppose there is another polynomial $q_G$ of geometric multiplicity at most $2n+1$  such that $q_G \neq p_G$, $q_G(x_i) = f(x_i)$ and $D_G \{q_G(x_i)\} = D_G f(x_i)$ for $i= 0, 1, \dots , n$. Then the polynomial $r(x) = p_G(x) \ominus q_G(x)$ will be of geometric degree at most $2n+1$ with $r(x_i) =1$ and $D_G  r(x_i) = 1$. Thus, each $x_i$ is a geometric root of $r(x)$ with geometric multiplicity $2$. Therefore, $r(x)$ has $2n+2$ geometric roots, whereas its geometric degree is at most $2n+1$. This shows that $r(x)\equiv 1$. That is, both the polynomials $p_G$ and $q_G$ are equal. This proves the theorem.
\end{proof}
\textbf{Construction of Newton's form of bigeometric Hermite interpolation formula:}
 We construct Newton's form of bigeometric Hermite interpolation formula. Let $z_i = x_{\lfloor i/2 \rfloor}$, where $\lfloor . \rfloor$ denotes the greatest integer function as follows. That is, $z_0 = x_0, \ z_1 = x_0, \ z_2 = x_1, \ z_3 = x_1, \ z_4 = x_2, \ z_5 = x_2$ and so on. We define the Newton's form of bigeometric Hermite interpolation formula as follows:
\begin{align}
p_G(x) &= f(x_0) \oplus \leftidx{_G} \sum\limits_{k=1}^{2n+1} f_G[z_0, z_1, \dots ,z_k] \odot \leftidx{_G}\prod \limits_{i=0}^{k-1} (x\ominus z_i)\\
         &= f(x_0) \oplus f_G[x_0,x_0]\odot (x\ominus x_0) \oplus f_G[x_0, x_0, x_1]\odot (x\ominus x_0)^{2_G} \nonumber\\
         & \qquad \oplus f_G[x_0,x_0,x_1,x_1] \odot (x\ominus x_0)^{2_G}\odot (x\ominus x_1) \oplus \dots \nonumber\\
         & \qquad \oplus f_G[x_0,x_0,x_1,x_1, \dots , x_{n},x_{n}]\odot (x\ominus x_0)^{2_G}\odot(x \ominus x_1)^{2_G} \nonumber\\
         & \qquad \odot \dots \odot (x \ominus x_{n-1})^{2_G}\odot(x\ominus x_n),\nonumber
\end{align}
 where
 \begin{align*}
 f_G[z_i, z_{i+1}] &= D_G f(z_i), \ \text{when} \ z_i = z_{i+1} \\
 f_G[z_i, z_{i+1}] &= \frac{f(z_{i+1})\ominus f(z_i) }{z_{i+1}\ominus z_i} {}_{\tiny{G}} = \left[\frac{f(z_{i+1})}{f(z_i)}\right]^{\frac{1}{\ln(\frac{z_{i+1}}{z_i})}},  \ \text{when} \ z_i \neq z_{i+1}
 \end{align*}
 and
 \begin{align*}
 f_G[z_i, z_{i+1}, \dots, z_{i+k}] &=
  \frac{f_G[z_{i+1}, \dots, z_{i+k}] \ominus f_G[z_i, \dots, z_{i+k-1}]}{z_{i+k} \ominus z_{i}} {}_{\tiny{G}} \ \text{for} \ k\geq 2.
   \end{align*}
   We can prove this formula by considering the polynomial
   \begin{equation}\label{formula_newtonsform2}
   p_G(x) = f(x_0) \oplus \leftidx{_G} \sum\limits_{k=1}^{2n+1} A_k \odot \leftidx{_G}\prod \limits_{i=0}^{k-1} (x\ominus z_i)
   \end{equation}
   and its bigeometric derivative
    \begin{equation}\label{formula_newtonsform3}
   D_G \{p_G(x)\} =  \leftidx{_G} \sum\limits_{k=1}^{2n+1} A_k \odot D_G\left\{\leftidx{_G}\prod \limits_{i=0}^{k-1} (x\ominus z_i)\right\},
   \end{equation}
   where $A_k$ for $k = 1, 2, \dots, 2n+1$ are constants to be determined. Putting the values of $x_i$ for $i = 0, 1, 2, \dots ,n$ in the equations (\ref{formula_newtonsform2}) and (\ref{formula_newtonsform3}), we get $A_k = f_G[z_0, z_1, \dots ,z_k]$.\par

   Next we illustrate below the construction of bigeometric Hermite interpolating polynomial by giving some examples..
   \begin{example}
Let the values of a function $f(x)$ and its derive $f'(x)$ are given as shown in the table.
\vspace{2mm}
\begin{center}
\begin{tabular}{ c| c c c c c c c c}
\hline
  $x$ & $e$  & $e^2$      \\
 \hline
$f(x)$ & $e^2$ & $e^{4}$ \\
\hline
$ f'(x)$ & $2e$ & $2e^2$ \\
\hline
 \end{tabular}
\end{center}
We will construct the bigeometric Hermite interpolating polynomial $p_G(x)$. We first compute the value of $D_G f(x)$ at the given points by using the formula $D_G f(x) = e^{x\frac{f'(x)}{f(x)}}$.
\vspace{1mm}
\begin{center}
\begin{tabular}{ c| c c c c c c c c}
\hline
  $x$ &  $e$ & $e^2$    \\
 \hline
$f(x)$ & $e^2$ & $e^4$ \\
\hline
$D_G f(x)$ & $e^2$ & $e^2$ \\
\hline
 \end{tabular}
\end{center}
\vspace{1mm}
 The divided difference table for bigeometric Hermite interpolation is as follows:
 \vspace{1mm}
 \begin{center}
 \begin{tabular}{c| c c c}
 \hline
 $x$ & $f(x)$ & 1st & 2nd\\
 \hline
 $x_0 = e$ & $e^2$ & {} & {}\\
 {} & {} & $e^2$ & {}\\
 $x_0 = e$ & $e^2$ & {} & 1\\
 {} & {} & $e^2$ & {}\\
 $x_1 = e^2$ & $e^4$ & {} & 1\\
 {} & {} & $e^2$ & {}\\
 $x_1 = e^2$ & $e^4$ & {} & {}\\
 \hline
 \end{tabular}
 \end{center}
 From Newton's form of bigeometric Hermite interpolation formula, we get
 \begin{align*}
 p_G(x) &= f(x_0) \oplus f_G[x_0, x_0]\odot (x\ominus x_0)\\
               & = e^2 \oplus e^2\odot (x\ominus e)\\
               & = e^2 e^{2\ln (x/e)}  = x^2.
 \end{align*}
 \end{example}

\begin{example}
Let the values of the function $f(x) = \ln x$ are given as shown in the table. The data in the following table have been taken from \cite{boruah2017application}.
\vspace{.5cm}
\begin{center}
\begin{tabular}{ c| c c c c c c c c}
\hline
  $x$ &  3 & 6  &  12  & 24   \\
 \hline
$f(x)$ & 1.0986 & 1.7918 & 2.4849 & 3.1781\\
\hline
 \end{tabular}
\end{center}
We will Plot the graph of bigeometric Hermite interpolating polynomial $p_G(x)$. We first compute the value of $D_G \ln(x)$ at the given points by using the formula $D_G \ln (x) = e^{\frac{1}{\ln (x)}}$.
\vspace{.5cm}
\begin{center}
\begin{tabular}{ c| c c c c c c c c}
\hline
  $x$ &  3 & 6  &  12  & 24   \\
 \hline
$f(x)$ & 1.0986 & 1.7918 & 2.4849 & 3.1781\\
\hline
$D_G f(x)$ & 2.4849 & 1.7474 & 1.4954 & 1.3698\\
\hline
 \end{tabular}
\end{center}
\vspace{.5cm}
 The divided difference table for bigeometric Hermite interpolation is as follows:
\vspace{5mm}
\begin{center}
\begin{tabular}{ c| c c c c c c c c}
\hline
  $x$ &  $f(x)$ & 1st  & 2nd  & 3rd  & 4th  & 5th  & 6th  & 7th   \\
 \hline
 $x_0 = 3$ & 1.0986 &  \\
           {}      &       {}       &  2.4849\\
 $x_0 = 3$ & 1.0986 &           {}    & 0.7445\\
           {}       &      {}       &  2.0254  &  {}           & 1.1257 \\
$x_1= 6$   & 1.7918 &  {}             &  0.8082 & {}          & 0.9613            \\
 {}                 & {}            & 1.7474   &  {}            & 1.0658 & {}          & 1.0139\\
 $x_1 = 6$ & 1.7918 & {}              & 0.8828 & {}              &0.9799 & {}           & 0.9959\\
 {}                 & {}            &  1.6028  & {}             & 1.0362   & {}           & 1.0053 & {}            & 1.0014  \\
$x_2 = 12$& 2.4849 & {}              & 0.9048  & {}              & 0.9908 & {}           & 0.9987 \\
{}                  & {}            & 1.4954   &  {}            & 1.0230    &  {}            &    1.0026 \\
 $x_2 = 12$& 2.4849 & {}             & 0.9338  &  {}             &  0.9944\\
 {}                   & {}           & 1.4261  & {}             & 1.0150 \\
$x_3 = 24$  & 3.1781& {}             & 0.9435  \\
{}                    & {}            & 1.3698  \\
 $x_3 = 24$&  3.1781\\
 \hline
\end{tabular}
\end{center}
 \vspace{5mm}
From Newton's form of bigeometric Hermite interpolation formula, we get
\small{
\begin{align*}
p_G(x) & = f(x_0) \oplus f_G[x_0,x_0]\odot (x\ominus x_0) \oplus f_G[x_0, x_0, x_1]\odot (x\ominus x_0)^{2_G} \oplus f_G[x_0,x_0,x_1,x_1] \\
         & \quad \odot (x\ominus x_0)^{2_G}\odot (x\ominus x_1) \oplus f_G[x_0, x_0, x_1, x_1, x_2]\odot (x\ominus x_0)^{2_G}\odot (x\ominus x_1)^{2_G} \oplus\\
         & \quad f_G[x_0, x_0, x_1, x_1, x_2, x_2]\odot (x\ominus x_0)^{2_G}\odot (x\ominus x_1)^{2_G}\odot (x\ominus x_2) \oplus\\
         &\quad  f_G[x_0, x_0, x_1, x_1, x_2, x_2, x_3]\odot (x\ominus x_0)^{2_G}\odot (x\ominus x_1)^{2_G}\odot (x\ominus x_2)^{2_G} \oplus\\
         &\quad f_G[x_0, x_0, x_1, x_1, x_2, x_2, x_3,x_3]\odot (x\ominus x_0)^{2_G}\odot (x\ominus x_1)^{2_G}\odot (x\ominus x_2)^{2_G} \odot (x \ominus x_3)\\
       & = 1.0986 \oplus 2.4849\odot (x\ominus 3) \oplus 0.7445\odot (x\ominus 3)^{2_G} \oplus 1.1257 \odot (x\ominus 3)^{2_G}\\
         & \quad \odot  (x\ominus 6)\oplus 0.9613\odot (x\ominus 3)^{2_G}\odot (x\ominus 6)^{2_G} \oplus1.0139\odot (x \ominus 3)^{2_G}\\
         & \quad \odot (x\ominus 6)^{2_G}\odot (x\ominus 12) \oplus0.9959\odot (x\ominus 3)^{2_G}\odot (x\ominus 6)^{2_G}\odot (x\ominus 12)^{2_G} \\
         &\quad  \oplus 1.0014\odot (x\ominus 3)^{2_G}\odot (x\ominus 6)^{2_G}\odot (x\ominus 12)^{2_G} \odot (x \ominus 24).
\end{align*}}
\begin{figure}[h]
\centering
\includegraphics[height=7cm, width=7cm,keepaspectratio]{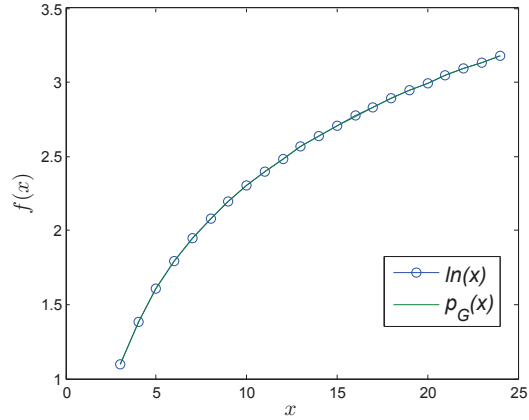}
\caption{Graph of $\ln(x)$ and $p_{\tiny{G}} (x)$ versus $x$.}\label{figure1}
\end{figure}
Figure \ref{figure1} shows that the graph of interpolating polynomial $p_G(x)$ almost overlapping the graph of $\ln (x)$ in the interval $[3, 24]$.
\end{example}

\section{Conclusion}
In this paper, we have introduced some sequence spaces in bigeometric calculus, determined their $\alpha$-duals and studied matrix transformations of these spaces. We have also derived an interpolating formula in bigeometric calculus and shown some related examples.

\section*{Acknowledgement}
The first author gratefully acknowledges a research fellowship awarded by the Council of Scientific and Industrial Research, Government of India (File No: 09/081(1246)/2015-EMR-I).

\bibliographystyle{plain}
\bibliography{mybib}

\end{document}